\documentclass[11pt,letterpaper]{article}

\usepackage[margin=1in]{geometry}
\usepackage{amsmath,amssymb,amsthm,bm}
\usepackage{graphicx}
\graphicspath{{figures/}}
\usepackage{booktabs}
\usepackage{comment}
\usepackage{placeins}
\usepackage[numbers,sort&compress]{natbib}
\usepackage[hidelinks]{hyperref}
\hypersetup{
  pdftitle={Exact Series and Error-Controlled Evaluation of the Nonconvex ell-p Proximity Operator},
  pdfauthor={Lixin Shen and Jiangyu Yu},
  pdfsubject={Nonconvex proximity operators and error-controlled evaluation},
  pdfkeywords={proximity operator, nonconvex optimization, Lagrange--Buermann inversion, Mellin--Barnes integral, inexact proximal gradient}
}

\DeclareMathOperator{\Log}{Log}

\DeclareMathOperator{\prox}{prox}
\newcommand{\eps}{\varepsilon}

\theoremstyle{plain}
\newtheorem{theorem}{Theorem}[section]
\newtheorem{lemma}[theorem]{Lemma}
\newtheorem{proposition}[theorem]{Proposition}
\newtheorem{corollary}[theorem]{Corollary}
\theoremstyle{definition}
\newtheorem{algorithm}[theorem]{Algorithm}

\title{Exact Series and Error-Controlled Evaluation of the Nonconvex $\ell_p$ Proximity Operator}
\author{Lixin Shen\thanks{Corresponding author.} \quad Jiangyu Yu\\[0.5ex]
\small Department of Mathematics, Syracuse University, Syracuse, NY 13244, USA\\
\small \texttt{lshen03@syr.edu} \\
\small Department of Mathematics and Statistics, Aubrun University, Auburn, AL 36849\\
\small \texttt{jiy0004@auburn.edu}}
\date{}

\begin{document}

\maketitle

\begin{abstract}
We study the scalar proximity operator of the nonconvex power penalty with exponent between zero and one. Lagrange--B\"urmann inversion gives an exact series for the larger stationary root, and the series converges uniformly throughout the active domain of the proximity operator. For rational exponents, an explicit residue-class decomposition reorganizes the coefficients into finite sums of generalized hypergeometric functions, recovering the familiar special cases and producing an additional formula for \(p=\tfrac13\). A Mellin--Barnes formula yields an equivalent residue-corrected representation and analytic truncation bounds. We distinguish the proximal activation threshold from the lower saddle-node threshold and prove that the active proximal branch is uniformly well-conditioned; the practical difficulty near activation is slow series convergence when the exponent approaches one, rather than singular root sensitivity. We also formulate an inexact proximal-gradient step using a computable optimality residual and a descent test. With summable descent errors and vanishing residuals, the objective values converge, the steps vanish, and every cluster point is critical. Tests over a broad parameter grid and in sparse-recovery proximal-gradient runs illustrate both evaluator accuracy and the implementability of the acceptance certificates.
\end{abstract}

\medskip
\noindent\textbf{Keywords:}
Proximity operator; nonconvex optimization; Lagrange--B\"urmann inversion;
Mellin--Barnes integral; inexact proximal gradient.

\medskip
\noindent\textbf{2020 Mathematics Subject Classification:}
90C26, 65K10, 49J52, 33C20.

\section{Introduction}

Sparsity priors are central to high-dimensional learning and inverse problems. The $\ell_0$ penalty models sparsity directly but leads to combinatorial optimization, whereas the convex $\ell_1$ relaxation is computationally attractive but can over-shrink large coefficients \cite{Candes-Romberg-Tao:IEEE-TIT:06,Candes-Tao:IEEE-TIT:06}. The nonconvex $\ell_p$ penalty with $0<p<1$ lies between these choices: it promotes sparsity more strongly than $\ell_1$ while reducing bias, but it also complicates analysis and computation \cite{Candes-Wakin-Boyd:JFAA:08,chartrand2008rip,foucart2009lq,LaiWang2021,sun2012lq}. Moreover, empirical studies indicate that natural-image wavelet coefficients are often well modeled by \(p\)-values in the range \([0.5,0.8]\) \cite{Simoncelli1999Bayesian}.

A core primitive in first-order methods for nonsmooth regularization is the proximity operator (see, e.g., \cite{AttouchBolteSvaiter2013KL,Beck-Teboulle:SIAMIS:09,Combettes-Wajs:MMS:05,li2015multi,Micchelli-Shen-Xu:IP-11}). Recent work has also emphasized explicit and structured computation of proximity operators for broad classes of penalties \cite{BricenoAriasVivarVargas2024,JiaPraterBennetteShen2026}. For a function $f$, the operator is defined by
\begin{equation*}
\prox_{\lambda f}(y)\in\arg\min_x \Big\{\tfrac12\|{x-y}\|_2^2+\lambda f(x)\Big\},
\end{equation*}
and it reduces componentwise to a scalar problem when $f(x)=\|x\|_p^p$. Hard- and soft-thresholding give closed forms for $\ell_0$ and $\ell_1$, and formulas are also known for $p=\tfrac12$ and $p=\tfrac23$ \cite{Cao-Sun-Xu:JVCIR:2013,chen2016computing}. For a general exponent, Newton and bisection are available \cite{chen2016computing,liu2024bisection}; our purpose is to provide a uniform analytic representation and an alternative evaluation mechanism with transparent truncation error.

The scalar $\ell_p$ proximity operator is governed by the input activation threshold $\tau_{p,\lambda}$ and the corresponding nonzero output level $\rho_{p,\lambda}$, both defined explicitly in Section~\ref{sec:preliminary}. For $|y|>\tau_{p,\lambda}$, its value is $\operatorname{sgn}(y)x_+(|y|)$, where $x_+(|y|)$ is the larger positive root of
\begin{equation*}
x - |y| + \lambda p\,x^{p-1}=0,\qquad x>0.
\end{equation*}
At $|y|=\tau_{p,\lambda}$, both zero and $\operatorname{sgn}(y)\rho_{p,\lambda}$ are minimizers. Thus, evaluating the nonzero output amounts to following one specified inverse branch rather than finding an arbitrary stationary point.

When \(p=r/s\in(0,1)\) is rational and written in lowest terms, the positive stationarity equation can be transformed into a trinomial algebraic equation. Classical and recent power-series and hypergeometric representations of related trinomial roots \cite{glasser2000hypergeometric,Mane2026Trinomial}, together with Mellin--Barnes representations of trinomial algebraic functions \cite{KocarTanabe2024Trinomial}, therefore provide relevant background. Our contribution is not the mere existence of such representations in the rational case. Rather, we identify the real branch selected by the global proximal minimizer, derive a unified inverse-series and Mellin--Barnes framework valid for every real \(p\in(0,1)\), including irrational \(p\), prove uniform convergence on the full active proximal domain, and connect the evaluation error to computable conditions for an inexact proximal-gradient method.

We recast the normalized stationarity equation as an analytic inverse problem near the regular point $v=1$. The main contributions are as follows.

\begin{itemize}
  \item Lagrange--B\"urmann inversion yields an exact power series for $x_+(y)$ for every $0<p<1$. The series converges uniformly on the entire active domain of the proximity operator.
  \item For every rational exponent, we give an explicit residue-class formula that reduces the inverse series to a finite sum of generalized hypergeometric functions. This recovers the classical cases and gives a compact representation for $p=\tfrac13$.
  \item An equivalent Mellin--Barnes representation leads to a residue-corrected finite-contour evaluator. We separate analytic tail error from numerical quadrature error, so the strength of the output guarantee is stated explicitly.
  \item We distinguish the proximal activation threshold from the lower saddle-node threshold. The active proximal branch satisfies a uniform condition-number bound, while the worst active-domain series convergence factor approaches one only as $p\uparrow1$.
  \item For proximal-gradient algorithms, we replace a distance-only inexactness condition by a computable optimality residual and a descent test. With summable descent errors and vanishing residuals, objective values converge, steps vanish, and every cluster point is critical.
\end{itemize}

Section~\ref{sec:preliminary} reviews the threshold structure. Section~\ref{sec:series} derives the inverse series, Section~\ref{sec:special} gives rational-exponent representations, and Section~\ref{sec:MB} develops the Mellin--Barnes formula, the evaluator, and its use in proximal gradient. Section~\ref{sec:conclusion} concludes. A separate supplement gives a Galois-theoretic classification of radical formulas.

\section{Preliminaries and Threshold Structure}\label{sec:preliminary}
In this section, we recall the threshold structure of the $\ell_p$ proximity operator for $0<p<1$ and identify the larger stationary branch selected by the global minimizer. Define \(\|x\|_p^p=\sum_{i=1}^n |x_i|^p\). We study the proximity operator
\[
\prox_{\lambda\|\cdot\|_p^p}(y)
=\arg\min_{x\in\mathbb{R}^n}\Big\{\tfrac12\|x-y\|_2^2+\lambda\|x\|_p^p\Big\}.
\]
By separability, computing $\prox_{\lambda\|\cdot\|_p^p}(y)$ reduces to the scalar problem for $y\in\mathbb{R}$,
\begin{equation}\label{eq:prox-scalar}
  \prox_{\lambda|\cdot|^p}(y) \;=\;\arg\min_{x\in\mathbb{R}}
  \Big\{ \tfrac12 (x-y)^2 + \lambda |x|^p \Big\}.
\end{equation}
Define the thresholds
\begin{equation}  
  \rho_{p,\lambda} := \big(2\lambda(1-p)\big)^{\frac{1}{2-p}}, \quad \tau_{p,\lambda} :=  \frac{2-p}{2(1-p)}\;\rho_{p,\lambda}. \label{def:tau-rho}
\end{equation}
According to \cite[Propositions 2 and 3]{chen2016computing},
\begin{equation}\label{eq:prox_p}
  \prox_{\lambda|\cdot|^p}(y) \;=\;
  \begin{cases}
    0, & |y| < \tau_{p,\lambda},\\[4pt]
    \{\,0,\; \operatorname{sgn}(y)\,\rho_{p,\lambda}\,\}, & |y| = \tau_{p,\lambda},\\[4pt]
    \operatorname{sgn}(y)\,x_+(|y|), & |y| > \tau_{p,\lambda},
  \end{cases}
\end{equation}
where $x_+(|y|)$ is the larger positive root of the first-order optimality condition for problem~\eqref{eq:prox-scalar}, that is,
\begin{equation}\label{def:Phi}
  \Phi_{p,\lambda,y}(x) \;:=\; x - y + \lambda p\,\operatorname{sgn}(x)\,|x|^{\,p-1} \;=\; 0. 
\end{equation}
It is known \cite[Lemma 3]{chen2016computing} that for $|y|>{\tau}_{p,\lambda}$, the function $\Phi_{p,\lambda,|y|}$ in \eqref{def:Phi}
has exactly two distinct roots $x_{-}(|y|)$ and $x_{+}(|y|)$ in $(0,\infty)$, ordered as 
\begin{equation}\label{eq:x+-}
    x_{-}(|y|)<\bigl(\lambda p(1-p)\bigr)^{\frac{1}{2-p}}<x_{+}(|y|).
\end{equation} 
Thus, proximal evaluation reduces to the larger root $x_+(|y|)$. Closed forms are known for $p=\tfrac12$ and $p=\tfrac23$ \cite{Cao-Sun-Xu:JVCIR:2013, chen2016computing}; otherwise Newton or bisection is commonly
used \cite{chen2016computing,liu2024bisection}. We seek instead a series valid for every $0<p<1$. 

\section{A Series Representation for
\texorpdfstring{$x_+(|y|)$}{the Larger Root}}\label{sec:series}

For $p=\tfrac12$ and $p=\tfrac23$, the stationarity equation~\eqref{def:Phi} reduces to a cubic or a structured quartic, respectively, yielding known closed-form solutions \cite{Cao-Sun-Xu:JVCIR:2013,chen2016computing}. For general $p$, however, the algebraic degree grows rapidly when $p$ is rational, while the equation is generally nonalgebraic when $p$ is irrational. We therefore reformulate~\eqref{def:Phi} as an inverse-function problem.

Writing the stationarity equation as $u=f(v)$, with $f$ analytic near a regular point, the Lagrange--B\"urmann theorem provides a convergent series for the inverse branch $v=g(u)$ \cite{flajolet2009analytic,henrici1974aca}. Applied here, it yields a unified series for $x_+(|y|)$ for every $p\in(0,1)$ and finite hypergeometric representations when $p$ is rational. Some proximity operators also admit closed forms through the Lambert \(W\) function---whose series and asymptotics follow from Lagrange inversion---notably for entropic/KL-type penalties and Poisson data terms \cite{Bauschke-Lindstrom:PAFA:2020,corless1996lambertw}. We first recall the Lagrange--B\"urmann theorem.

\begin{lemma}[Lagrange--B\"urmann \cite{Whittaker_Watson_1996}]\label{lem:lagrange-burmann}
Let $f$ be analytic in a neighborhood of $v_0\in\mathbb{C}$ with $f(v_0)=u_0$ and $f'(v_0)\neq 0$.
Then $f$ admits a local analytic inverse $g$ near $u_0$ with $g(u_0)=v_0$, and, for $u$ near $u_0$,
\begin{equation}\label{eq:LB-general}
  g(u)
  \;=\;
  v_0\;+\;\sum_{n=1}^\infty \frac{(u-u_0)^n}{n!}\,
  \left.\frac{d^{\,n-1}}{dw^{\,n-1}}
  \!\left(\left(\frac{w-v_0}{f(w)-u_0}\right)^{\!n}\right)\right|_{w=v_0}.
\end{equation}
\end{lemma}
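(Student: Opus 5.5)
The plan is the classical route: inverse function theorem, Cauchy's integral formula for the coefficients of the inverse, and a residue computation. First, since $f'(v_0)\neq0$, the holomorphic inverse function theorem gives neighborhoods $V\ni v_0$ and $U\ni u_0$ with $f\colon V\to U$ biholomorphic and inverse $g$. Shrinking $V$, I take a closed disc $\overline{D}=\{|w-v_0|\le r\}\subset V$. On this disc $f(w)-u_0$ vanishes only at $w=v_0$, and that zero is simple. Set $\delta=\min_{|w-v_0|=r}|f(w)-u_0|>0$. For $|u-u_0|<\delta$, Rouché's theorem shows that $f(w)-u$ has exactly one zero in $D$, namely $g(u)$.

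Next I would write $g(u)$ as a contour integral. The argument principle with the weight $w$ gives
\[
g(u)=\frac{1}{2\pi i}\oint_{|w-v_0|=r}\frac{w\,f'(w)}{f(w)-u}\,dw .
\]
On the circle $|u-u_0|<\delta\le|f(w)-u_0|$, so I can expand
\[
\frac{1}{f(w)-u}=\sum_{n\ge0}\frac{(u-u_0)^n}{(f(w)-u_0)^{n+1}} .
\]
This series converges uniformly in $w$, so it may be integrated term by term, which yields a power series in $u-u_0$. The $n=0$ term is $\frac{1}{2\pi i}\oint w f'/(f-u_0)\,dw=v_0$, again by the argument principle.

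For $n\ge1$, I integrate by parts around the closed contour, using
\[
\frac{f'(w)}{(f(w)-u_0)^{n+1}}=-\frac1n\,\frac{d}{dw}\,(f(w)-u_0)^{-n}.
\]
This turns the $n$-th coefficient into $\frac{1}{n}\cdot\frac{1}{2\pi i}\oint (f(w)-u_0)^{-n}\,dw$. Writing
\[
(f(w)-u_0)^{-n}=\frac{\phi(w)^n}{(w-v_0)^n},\qquad \phi(w)=\frac{w-v_0}{f(w)-u_0},
\]
where $\phi$ is holomorphic and nonvanishing on $\overline D$ (the zero is removable because it is simple), the residue theorem gives $\frac{1}{(n-1)!}\,\frac{d^{n-1}}{dw^{n-1}}\phi^n\big|_{v_0}$. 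Multiplying by $1/n$ produces exactly the coefficient $\frac{1}{n!}\,\frac{d^{n-1}}{dw^{n-1}}\phi(w)^n\big|_{w=v_0}$ in \eqref{eq:LB-general}.

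The main obstacle is careful rather than deep: justifying the interchange of sum and integral, which needs the uniform bound $|u-u_0|/\delta<1$ on the circle, and confirming that the removable singularity makes $\phi^n$ holomorphic, so that the residue equals the derivative formula. The local validity for $|u-u_0|<\delta$ is all the lemma asserts. Any larger radius of convergence, such as the uniform convergence on the active domain claimed later, would need a separate analysis of the singularities of $g$.
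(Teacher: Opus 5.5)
Your argument is correct. It is the classical contour-integral proof: Rouch\'e's theorem, the weighted argument principle, geometric expansion in $u-u_0$, integration by parts, and a residue at $v_0$ where $\phi=(w-v_0)/(f(w)-u_0)$ is holomorphic on $\overline{D}$. The paper gives no proof of its own; it cites Whittaker and Watson, whose derivation of Lagrange's theorem follows essentially this route, and the steps you flag (term-by-term integration for $|u-u_0|<\delta$, the vanishing boundary term, the removable singularity) go through as you describe.
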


Thus, if $f$ is locally one-to-one at $v_0$, its inverse can be expanded at $u_0=f(v_0)$ using derivatives of $f$, without solving $u=f(v)$ in closed form.

To apply the Lagrange--B\"urmann inversion theorem to the $\ell_p$ proximal problem, we recall the
first-order optimality condition \eqref{def:Phi}:
\[
\Phi_{p,\lambda,y}(x)\;:=\;x - y + \lambda p\,\operatorname{sgn}(x)\,|x|^{\,p-1}\;=\;0.
\]
Figure~\ref{fig:comparison_1} illustrates $\Phi_{p,\lambda,y}(x)$ for $p=\tfrac{2}{3}$, $\lambda=0.8$, and $y=1.5$; the qualitative shape is similar for other choices of $(p,\lambda,y)$. For $y>0$, any root of
$\Phi_{p,\lambda,y}$ lies in $(0,\infty)$ (in fact, from $x-y+\lambda p\,x^{p-1}=0$ we obtain
$x=y-\lambda p\,x^{p-1}\in(0,y)$), so we restrict attention to $x>0$.

To put the equation in an invertible analytic form suitable for Lagrange--B\"urmann, divide by $y>0$
and set
\[
\varepsilon \;:=\; \lambda p\,y^{\,p-2}, \qquad
v \;:=\; \frac{x}{y}, \qquad
f(v) \;:=\; (1-v)\,v^{\,1-p}.
\]
Then \eqref{def:Phi} is equivalent to the scalar equation
\[
f(v)\;=\;\varepsilon.
\]
Thus, solving $\Phi_{p,\lambda,y}(x)=0$ is equivalent to inverting the analytic map $v\mapsto f(v)$ at
the value $\varepsilon$, with the larger root corresponding to the branch $v\in(0,1)$ near $v=1$.
(Note that $f$ is analytic near $v=1$ and $f'(1)=-1\neq 0$, so the Lagrange--B\"urmann inversion
applies.)

On $(0,\infty)$, $\Phi_{p,\lambda,y}$ has a unique minimum, whereas $f$ has
the unique maximizer $v_\star=(1-p)/(2-p)$. When two stationary roots
exist, the right intersection $v_+\in(v_\star,1)$ maps to the larger root
$x_+(y)=yv_+$. Figure~\ref{fig:comparison_1} illustrates these equivalent
descriptions.

\begin{figure}[!t]
  \centering
  \begin{tabular}{@{}p{0.45\linewidth}p{0.45\linewidth}@{}}
    \includegraphics[width=\linewidth]{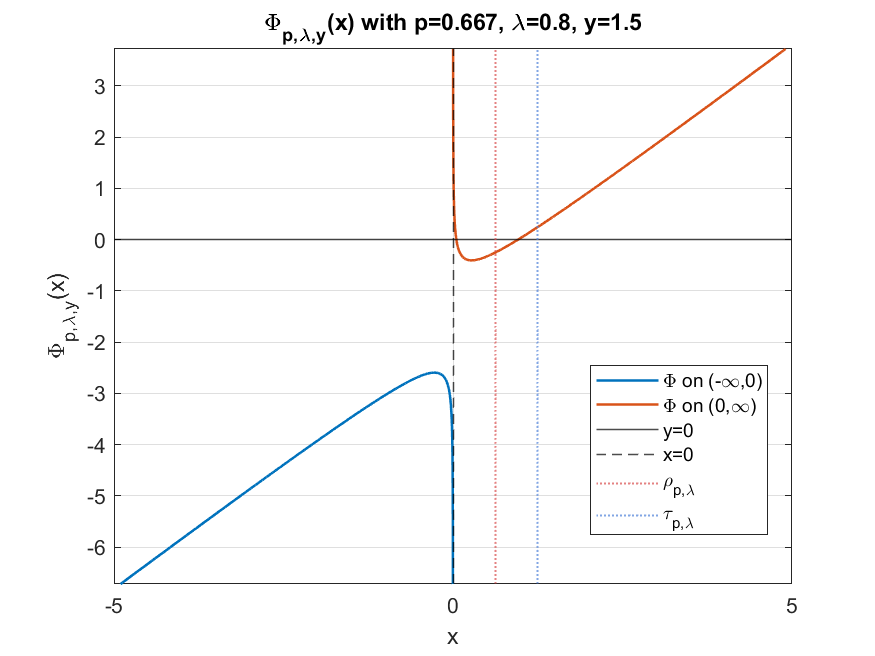} &
    \includegraphics[width=\linewidth]{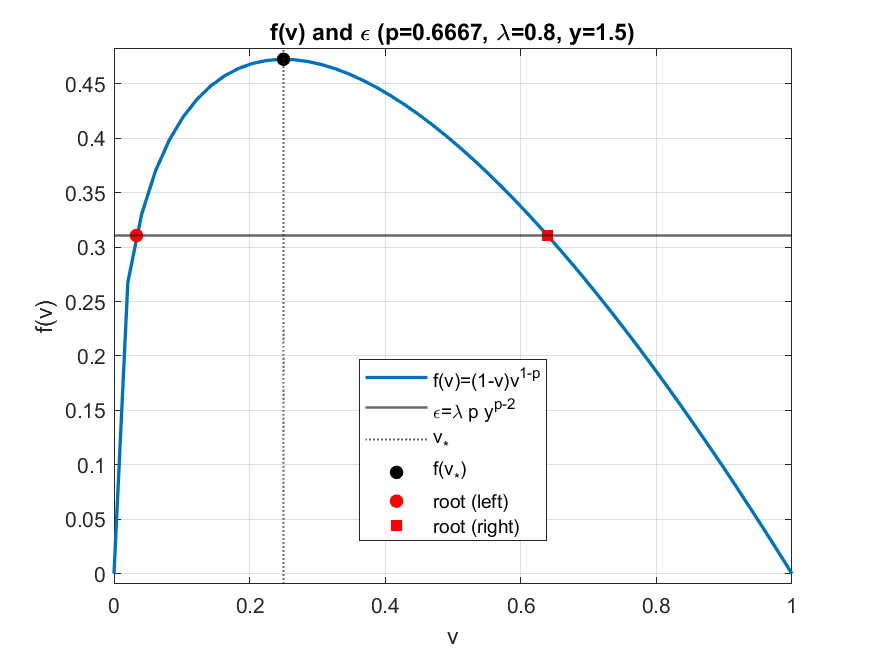} \\
    \centering (a) & \centering (b)
  \end{tabular}
  \caption{\textbf{Stationary equation and reparameterization.}
  (a) Graph of $\Phi_{p,\lambda,y}(x)$ for $p=\tfrac{2}{3}$, $\lambda=0.8$, and $y=1.5$.
  On $(0,\infty)$ the curve decreases to a unique minimum at
  $x_{\min}=(\lambda p(1-p))^{1/(2-p)}$ and then increases, producing two positive roots
  $x_-(y)<x_+(y)$ when the minimum is negative.
  (b) The transformed equation $f(v)=(1-v)v^{\,1-p}=\varepsilon$ with
  $\varepsilon=\lambda p\,y^{\,p-2}$ on $v\in[0,1]$. Intersections $v_\pm$ map to the zeros of
  $\Phi_{p,\lambda,y}$ by $x_\pm=y\,v_\pm$; the right intersection $v_+$ corresponds to the larger root
  $x_+(y)$ used in the proximal operator. The function $f$ attains its maximum at
  $v_\star=\frac{1-p}{2-p}$; the critical level $\varepsilon=f(v_\star)$ marks the transition between
  two, one (merged), and zero intersections.}
  \label{fig:comparison_1}
\end{figure}

\begin{lemma}[Local inverse and radius of convergence]\label{lem:f-inverse}
Let $0<p<1$. Set $\mathbb{D}:=\mathbb{C}\setminus(-\infty,0]$ and let $\Log:\mathbb{D}\to\mathbb{C}$ denote the principal branch with $\Log(1)=0$. Define
\[
f(v):=(1-v)v^{1-p},
\qquad
v^\alpha:=\exp\bigl(\alpha\Log v\bigr)
\quad (\alpha\in\mathbb{R}).
\]
Then there exist neighborhoods $U\ni0$ and $V\ni1$, with $V\subset\mathbb{D}$, such that $f:V\to U$ is biholomorphic and its inverse $g:U\to V$ satisfies $g(0)=1$. Moreover, the Taylor series of $g$ is
 \begin{equation}\label{eq:g}
    g(u)=1-u-\sum_{n=2}^{\infty}\frac{u^n}{n!}\cdot \prod_{j=0}^{n-2}\left( n(1-p) + j\right). 
 \end{equation}
and has radius of convergence $u_\star$, where
\begin{equation}\label{eq:vstar-ustar}
  v_{\star}:=\frac{1-p}{2-p}, \qquad
  u_{\star}:=f(v_{\star})=\frac{(1-p)^{\,1-p}}{(2-p)^{\,2-p}}.
\end{equation}
Here, $v_\star$ is the unique maximizer of $f$ on $(0,1)$ and $u_\star$ is its maximum value.
\end{lemma}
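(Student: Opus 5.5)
The proof has three parts: local invertibility, the coefficient formula, and the radius of convergence.

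\emph{Local invertibility.} Since $\Log$ is holomorphic on $\mathbb{D}$, $f$ is holomorphic there. We have $f(1)=0$ and $f'(v)=-v^{1-p}+(1-p)(1-v)v^{-p}$, so $f'(1)=-1\neq0$. The holomorphic inverse function theorem then gives neighborhoods $V\ni1$ (shrunk to lie in $\mathbb{D}$) and $U\ni0$ with $f:V\to U$ biholomorphic and $g(0)=1$.

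\emph{Coefficients.} We apply Lemma~\ref{lem:lagrange-burmann} with $v_0=1$, $u_0=0$. Put $w=1+t$. Then
\[
\frac{w-1}{f(w)}=-(1+t)^{-(1-p)},
\]
so the $n$th coefficient is
\[
\frac{(-1)^n}{n!}\,\frac{d^{n-1}}{dt^{n-1}}(1+t)^{-n(1-p)}\Big|_{t=0}
=\frac{(-1)^n}{n!}\prod_{j=0}^{n-2}\bigl(-n(1-p)-j\bigr)
=-\frac{1}{n!}\prod_{j=0}^{n-2}\bigl(n(1-p)+j\bigr).
\]
The sign works out because $(-1)^n(-1)^{n-1}=-1$. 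For $n=1$ the empty product gives $-u$, which matches \eqref{eq:g}.

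\emph{Radius of convergence.} Write $a_n=\frac{1}{n!}\prod_{j=0}^{n-2}(n(1-p)+j)=\frac{\Gamma(n(2-p)-1)}{n!\,\Gamma(n(1-p))}$. By Stirling's formula, $a_n^{1/n}\to (2-p)^{2-p}/(1-p)^{1-p}=1/u_\star$. This is the standard $\Gamma(an)/\Gamma(bn)\Gamma((a-b)n)$ computation: with $a=2-p$ and $b=1-p$ the entropy terms give $a^a/(b^b\cdot 1^1)$, and the polynomial prefactors disappear under $n$th roots. Hence the radius is exactly $u_\star$ by Cauchy–Hadamard.

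As a conceptual cross-check, which is not needed for the proof, note that all coefficients beyond the constant term are negative. So by Pringsheim's theorem the singularity on the circle of convergence sits at $u=u_\star$. That point corresponds to the branch point $f'(v_\star)=0$, where $f'(v)=v^{-p}\bigl((1-p)-(2-p)v\bigr)$ vanishes at $v_\star=(1-p)/(2-p)$. Along the real segment $u\in[0,u_\star)$, $g$ continues as the right branch $v_+\in(v_\star,1]$.

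The last claim, that $v_\star$ is the unique maximizer on $(0,1)$, follows from the sign of $f'$ found above: it is positive before $v_\star$ and negative after. Evaluating gives $f(v_\star)=\frac{1}{2-p}\bigl(\frac{1-p}{2-p}\bigr)^{1-p}=u_\star$.

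The main obstacle is the Stirling asymptotics. We need care with the Gamma-function form of the product, namely $\prod_{j=0}^{n-2}(c+j)=\Gamma(c+n-1)/\Gamma(c)$ with $c=n(1-p)$. Once that form is written down, the ratio test gives the limit more simply: $a_{n+1}/a_n\to(2-p)^{2-p}/(1-p)^{1-p}$, checked through $\Gamma(x+\alpha)/\Gamma(x)\sim x^\alpha$.
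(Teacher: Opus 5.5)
Your proposal is correct, but for the radius of convergence it takes a different route from the paper. The coefficient computation is the same Lagrange--B\"urmann calculation: the paper differentiates $(-1)^n w^{n(p-1)}$ directly, while you substitute $w=1+t$.

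For the radius, the paper argues geometrically. It applies Rouch\'e's theorem on the disk $|v-1|<1-v_\star$, where $\min|f|=u_\star$ on the boundary. This shows that for every $|u|<u_\star$ the equation $f(v)=u$ has a unique simple root in that disk, so $g$ is analytic on $|u|<u_\star$. It then identifies $u_\star$ as a square-root branch point ($f'(v_\star)=0$, $f''(v_\star)\neq0$) to get the matching upper bound.

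You instead read the radius directly off the explicit coefficients, via Stirling or the ratio test with $\Gamma(x+\alpha)/\Gamma(x)\sim x^\alpha$. The key point is that $2-p=(1-p)+1$, so the $n\log n$ terms cancel.

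\textbf{What your route buys.} It gives both inequalities at once, with no contour or branch-point analysis. It also yields the sharper asymptotics $a_n\sim C n^{-3/2}u_\star^{-n}$, which the paper needs later anyway in Lemma~\ref{lem:series}.

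\textbf{What the paper's route buys.} It gives structural information beyond the statement. For any $|u|<u_\star$, the sum of the series is the unique root of $f(v)=u$ in a disk lying inside $\mathbb{D}$, so $f(g(u))=u$ holds on the whole disk of convergence. Theorem~\ref{thm:series-larger-root} relies on exactly this identity when it evaluates the series at $\varepsilon$ near $q_pu_\star$ and asserts that the sum solves the stationarity equation.

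With your approach that identity needs a separate continuation argument, which you only sketch as a cross-check. One version works along $[0,u_\star)$. On the maximal interval where $g>0$, $f\circ g$ is real-analytic and equals $u$ near $0$, hence everywhere on that interval. The interval cannot end before $u_\star$, since there $g\to0$ would force $f(g(u))\to0\neq u$.

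For the lemma as stated, nothing is missing.
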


\begin{proof}
The function $f$ is analytic at $v_0=1$, with $u_0=f(v_0)=0$ and $f'(v_0)=-1\neq0$. By Lemma~\ref{lem:lagrange-burmann}, the local inverse $g$ of $f$ at $0$ satisfies
\begin{align*}
g(u) = 1 + \sum_{n=1}^{\infty}\frac{u^n}{n!}
 \left.\frac{d^{\,n-1}}{dw^{\,n-1}}
 \!\left(\left(\frac{w-1}{f(w)}\right)^{\!n}\right)\right|_{w=1}.
\end{align*}
For $f(w)=(1-w)w^{1-p}$,
$\big(\tfrac{w-1}{f(w)}\big)^{n}
=\big(-w^{\,p-1}\big)^{n}=(-1)^n w^{\,n(p-1)}$, hence
\begin{align*}
\left.\frac{d^{\,n-1}}{dw^{\,n-1}}
  \!\left(\left(\frac{w-1}{f(w)}\right)^{\!n}\right)\right|_{w=1}
  = -\prod_{j=0}^{n-2}\bigl(n(1-p)+j\bigr),
\end{align*}
for $n\ge2$. This proves \eqref{eq:g}.

To determine the radius, note first that $f'(v) = -v^{1-p} + (1-v)(1-p)v^{-p}$, so $f'$ vanishes only at $v_\star$ and $u_\star$ is a critical value of $f$.

Fix the open disk
\[
B:=\{v:|v-1|<\rho\},\qquad \rho:=1-v_\star.
\]
Then $f$ is analytic on $\bar{B}$ and $f'(v) \neq 0$ on $B$ (the only critical point $v_\star$ lies on $\partial B$). On $\partial B$, write $v=1-\rho e^{i \theta}$. A direct estimate gives 
\[
\min_{v \in \partial B} |f(v)|=\min_{\theta \in [0,2\pi]}\rho  |1-\rho e^{i\theta}|^{1-p}=\rho (1-\rho)^{1-p}=(1-v_\star)v_\star^{1-p}=u_\star. 
\]
Therefore, for any $|\tilde{u}|<u_\star$, Rouch\'e's theorem shows that $f$ and $f-\tilde{u}$ have the same number of zeros in $B$. Since $f$ has the unique zero $v=1$ in $B$, the equation $f(v)=\tilde{u}$ also has a unique solution $\tilde{v}\in B$. Because $f'(\tilde{v})\neq0$, the inverse $g$ is analytic at $\tilde{u}$, with $g(\tilde{u})=\tilde{v}$. Hence $g$ is analytic on $\{u:|u|<u_\star\}$.

Finally, $u_\star$ is a singular value for the inverse: the two real preimages
$v_\pm$ coalesce at $v_\star$, where
$f'(v_\star)=0$ and
$f''(v_\star)=-(2-p)v_\star^{-p}\neq0$. Thus the inverse has a square-root
branch point at $u_\star$, so the radius of convergence is exactly $u_\star$.
\end{proof}

Next we present a series representation and prove that it converges to the
larger stationary root throughout the active proximal domain.

\begin{theorem}[Series for the larger root]\label{thm:series-larger-root}
Let $0<p<1$, $\lambda>0$, and $y\ge\tau_{p,\lambda}$. Define 
\[
\begin{split}
\varepsilon(y)&:=\lambda p y^{p-2},\\
S(y)&:=y\left(1-\varepsilon(y)
-\sum_{n=2}^{\infty}\frac{\varepsilon(y)^n}{n!}
\prod_{j=0}^{n-2}\bigl(n(1-p)+j\bigr)\right).
\end{split}
\]
The series defining \(S(y)\) is absolutely convergent, uniformly for
\(y\in[\tau_{p,\lambda},\infty)\). Moreover, \(S(y)=x_+(y)\), the larger
positive solution of the stationary equation in \eqref{def:Phi}.

\end{theorem}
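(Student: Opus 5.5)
The plan has three parts: show that \(\varepsilon(y)\) stays in the closed disk of radius \(u_\star\) on the active domain, prove absolute convergence of the series on that closed disk, and identify the sum with \(x_+(y)\) by continuity of \(g\) and uniqueness of the root near \(v=1\).

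\textbf{Step 1 (range of \(\varepsilon\)).} The map \(y\mapsto\varepsilon(y)=\lambda p y^{p-2}\) is positive and strictly decreasing on \((0,\infty)\), so \(\sup_{y\ge\tau_{p,\lambda}}\varepsilon(y)=\varepsilon(\tau_{p,\lambda})\). Substituting \eqref{def:tau-rho} gives
\[
\varepsilon(\tau_{p,\lambda})=\lambda p\,\tau_{p,\lambda}^{p-2}=\frac{p}{2(1-p)}\Bigl(\frac{2(1-p)}{2-p}\Bigr)^{2-p},
\]
and we need this to be at most \(u_\star\). A cleaner route avoids the algebra: at \(y=\tau_{p,\lambda}\) the point \(\rho_{p,\lambda}\) is a positive stationary root, so \(f(\rho_{p,\lambda}/\tau_{p,\lambda})=\varepsilon(\tau_{p,\lambda})\), and \(f\le u_\star\) on \((0,1)\). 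Moreover, \(\rho_{p,\lambda}/\tau_{p,\lambda}=2(1-p)/(2-p)>v_\star\), so this point lies on the right branch. Since \(\max f=u_\star\) is attained only at \(v_\star\), we in fact get \(\varepsilon(\tau_{p,\lambda})<u_\star\) strictly. Then \(\varepsilon(y)\le\bar\varepsilon:=\varepsilon(\tau_{p,\lambda})<u_\star\) for all \(y\ge\tau_{p,\lambda}\).

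\textbf{Step 2 (uniform absolute convergence).} The coefficients \(c_n=\frac1{n!}\prod_{j=0}^{n-2}(n(1-p)+j)\) are positive, and by Lemma~\ref{lem:f-inverse} the series \(\sum c_n u^n\) has radius \(u_\star\). Hence \(\sum c_n\bar\varepsilon^{\,n}<\infty\), and the Weierstrass M-test with \(|c_n\varepsilon(y)^n|\le c_n\bar\varepsilon^{\,n}\) gives absolute and uniform convergence of the bracket in \(y\in[\tau_{p,\lambda},\infty)\). The outer factor \(y\) is unbounded, so "uniformly" should be read as referring to the series in the bracket. If uniformity of \(S\) itself is wanted, I would note that \(y\,\varepsilon(y)^n\) is still bounded by \(\tau_{p,\lambda}\bar\varepsilon^{\,n}\) for \(n\ge1\), because \(y^{1+n(p-2)}\) is decreasing once \(n\ge1\). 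So the tail \(y\sum_{n\ge N}c_n\varepsilon^n\) is uniformly small as well.

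\textbf{Step 3 (identification with \(x_+\)).} This is the main obstacle. By Lemma~\ref{lem:f-inverse}, \(g\) is analytic on \(|u|<u_\star\), and \(S(y)=y\,g(\varepsilon(y))\). The identity \(f(g(u))=u\) holds near \(0\), hence on the whole disk by analytic continuation, and in particular for real \(u\in[0,\bar\varepsilon]\). Thus \(v=g(\varepsilon)\) solves \(f(v)=\varepsilon\), and \(yv\) is a stationary point, provided \(v\) is real and positive. To pin down which root it is, I would argue by continuity along the real segment. The coefficients are real, so \(g\) is real on \([0,\bar\varepsilon]\), with \(g(0)=1\). Also \(g'(u)=1/f'(g(u))\) never vanishes there, and \(g(u)\) cannot cross \(v_\star\), since that would force \(u=f(v_\star)=u_\star\). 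Together with \(g(0)=1>v_\star\), this gives \(g(u)\in(v_\star,1]\) for all \(u\in[0,\bar\varepsilon]\). Because \(f\) is strictly decreasing on \((v_\star,1)\), \(g(\varepsilon)\) is exactly the right intersection \(v_+\). Finally \(x_+(y)=yv_+\), as noted before Figure~\ref{fig:comparison_1}, so \(S(y)=x_+(y)\). At \(y=\tau_{p,\lambda}\) this gives \(\rho_{p,\lambda}\), consistent with \eqref{eq:prox_p}.
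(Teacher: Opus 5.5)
Your proposal is correct and follows the paper's proof step for step: you bound $\varepsilon(y)$ by $\varepsilon(\tau_{p,\lambda})<u_\star$, apply the M-test with the monotonicity of $y\,\varepsilon(y)^n$ to handle the factor $y$, and select the branch by continuity from $g(0)=1$, where your no-crossing argument at $v_\star$ spells out what the paper compresses into ``continuity gives''. The only local difference is that you obtain $\varepsilon(\tau_{p,\lambda})<u_\star$ geometrically, from $f(\rho_{p,\lambda}/\tau_{p,\lambda})=\varepsilon(\tau_{p,\lambda})$ with $\rho_{p,\lambda}/\tau_{p,\lambda}=2v_\star\in(v_\star,1)$, whereas the paper computes the explicit ratio $\varepsilon(\tau_{p,\lambda})=q_pu_\star$ with $q_p=p\,2^{1-p}<1$, a constant it reuses later in the conditioning and convergence-factor discussion.
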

\begin{proof}
Let $f$ and its inverse branch $g$ be as in Lemma~\ref{lem:f-inverse}. Since $\varepsilon(y)$ decreases with $y$, for $y\ge\tau_{p,\lambda}$,
\begin{equation*}
    \varepsilon(y) \le \varepsilon(\tau_{p,\lambda})
    =q_pu_\star<u_\star,
    \qquad q_p:=p\,2^{1-p}<1.
\end{equation*}
Indeed, $p\mapsto\log(p\,2^{1-p})$ is strictly increasing on $(0,1)$ and
vanishes at $p=1$. The power series for $g$ therefore converges absolutely and
uniformly for all values of $\varepsilon(y)$ in the compact interval
$[0,q_pu_\star]$.
The same remains true after rescaling by \(y\). Indeed, for \(n\ge1\),
\[
 y\varepsilon(y)^n=(\lambda p)^n y^{\,1-n(2-p)}
\]
decreases on \([\tau_{p,\lambda},\infty)\). Thus the tail of the series for
\(S(y)\) is bounded by \(\tau_{p,\lambda}\) times the corresponding tail at
\(\varepsilon(\tau_{p,\lambda})\), which tends to zero uniformly.
Set $z(y):=g(\varepsilon(y))$ and $x(y):=y z(y)$. Since
$f(z(y))=\varepsilon(y)$, the definition of $f$ shows that
$\Phi_{p,\lambda,y}(x(y))=0$, and substituting \eqref{eq:g} gives the displayed formula for $S(y)$.

It remains to identify the branch. For each $0<\varepsilon<u_\star$, the equation $f(v)=\varepsilon$ has two real solutions, one in $(0,v_\star)$ and one in $(v_\star,1)$. Because $g(0)=1$, continuity gives $g(\varepsilon)\in(v_\star,1)$; hence it selects the larger solution. Therefore $x(y)=x_+(y)$ throughout the active domain.
\end{proof}

To conclude this section, we record a compact Gamma-function form of $S(y)$. Using $\Gamma(z+1)=z\Gamma(z)$, we may write
\begin{align}\label{eq:S(y)-Gamma}
S(y)=y\left(1-\sum_{n=1}^{\infty}\frac{\varepsilon(y)^n}{n!}
\frac{\Gamma(n(2-p)-1)}{\Gamma(n(1-p))}\right),
\end{align}
where $\varepsilon(y):=\lambda p y^{p-2}$. Denote the parenthesized quantity by $z(\varepsilon)$. Then $S(y)=y z(\varepsilon(y))$. 


\section{Rational Exponents and Hypergeometric Representations}\label{sec:special}

Let \(p=r/s\in(0,1)\), where \(r\) and \(s\) are coprime positive integers with \(r<s\). For \(x>0\), set \(t=x^{1/s}\). Multiplying the stationarity equation
$$
x-|y|+\lambda p x^{p-1}=0
$$
by \(x^{1-p}=t^{s-r}\) gives
$$
t^{2s-r}-|y|t^{s-r}+\lambda p=0,
$$
which is a trinomial algebraic equation. Thus, the existence of special-function representations in the rational case belongs to the general theory of trinomial roots \cite{glasser2000hypergeometric,Mane2026Trinomial,KocarTanabe2024Trinomial}. The purpose of this section is to specialize the unified inverse series derived above to the real proximal branch and give its residue-class decomposition explicitly. The resulting formula fixes both the relevant branch and its domain of validity. We then recover the familiar formulas for \(p=\tfrac12\) and \(p=\tfrac23\) and, to the best of our knowledge, obtain a compact \(p=\tfrac13\) representation not previously recorded in the proximal-operator literature.


Let us first introduce the rising Pochhammer symbol and the hypergeometric function that will simplify our discussion. 

We use
\[
(a)_n:=\overbrace{a(a+1)(a+2)\cdots(a+n-1)}^{n\ \text{factors}}
\]
for the rising Pochhammer symbol, with $(a)_0=1$; equivalently, $(a)_n=\Gamma(a+n)/\Gamma(a)$.

The generalized hypergeometric function associated with two sequences $(a_i)_{i=1}^\alpha$ and $(b_i)_{i=1}^\beta$ is defined by the power series
\[
{}_{\alpha}F_{\beta}(a_1,\ldots, a_\alpha;b_1,\ldots,b_\beta;z)=\sum_{n=0}^\infty \frac{(a_1)_n\cdots (a_\alpha)_n}{(b_1)_n\cdots (b_\beta)_n}\frac{z^n}{n!}. 
\]
The radius of convergence of ${}_\alpha F_\beta$ is as follows \cite{Luke1969}:
\begin{itemize}
\item If $\alpha\le \beta$, then ${}_\alpha F_\beta$ is entire (\emph{infinite} radius of convergence).
\item If $\alpha=\beta+1$, the radius is \emph{one} ($|z|<1$).
\item If $\alpha\ge \beta+2$, the radius is \emph{zero} (diverges for all $z\ne 0$, unless the series terminates).
\end{itemize}

We repeatedly use Gauss's multiplication formula \cite{abramowitz1972handbook},
\begin{equation}\label{eq:Multi-Theorem}
    \prod_{j=0}^{M-1} \Gamma\left(z+\frac{j}{M}\right)=(2\pi)^{\frac{M-1}{2}} M^{\frac{1}{2}-Mz} \Gamma(Mz).
\end{equation}

\begin{proposition}[Explicit rational-exponent decomposition]
\label{prop:rational-hypergeom}
Let \(p=r/s\in(0,1)\), where \(r\) and \(s\) are coprime, and set
\[
 A:=2s-r,\qquad B:=s-r,\qquad
 \kappa:=\frac{A^A}{B^B s^s}.
\]
For \(j=1,\ldots,s\), define
\[
 \alpha_j:=\frac{Aj}{s}-1,\qquad
 \beta_j:=\frac{Bj}{s},\qquad
 C_j:=\frac{\Gamma(\alpha_j)}{\Gamma(\beta_j)\Gamma(j+1)},
\]
and the parameter multisets
\begin{align*}
 \mathcal A_j&:=\left\{\frac{\alpha_j+k}{A}:k=0,\ldots,A-1\right\},\\
 \mathcal B_j&:=
 \left(\left\{\frac{\beta_j+k}{B}:k=0,\ldots,B-1\right\}\mathbin{\cup}\right.\\[-2pt]
 &\hspace{2.7em}\left.
 \left\{\frac{j+1+k}{s}:k=0,\ldots,s-1\right\}\right)
 \mathbin{\setminus}\{1\},
\end{align*}
where one occurrence of \(1\) is removed. Then, for \(|\eps|<u_\star\),
\begin{equation}\label{eq:general-rational-hypergeom}
 z(\eps)=1-\sum_{j=1}^{s} C_j\eps^j\,
 {}_{A}F_{A-1}\!\left(\mathcal A_j;\mathcal B_j;
 \kappa\eps^s\right).
\end{equation}
Moreover, \(\kappa u_\star^s=1\), so each hypergeometric argument has
modulus less than one on the disk of convergence of \(z\).
\end{proposition}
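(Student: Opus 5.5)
Starting from the Gamma form \eqref{eq:S(y)-Gamma}, $z(\eps)=1-\sum_{n\ge1} c_n\eps^n$ with
\[
c_n=\frac{\Gamma(n(2-p)-1)}{n!\,\Gamma(n(1-p))},
\]
valid for $|\eps|<u_\star$ by Lemma~\ref{lem:f-inverse}. With $p=r/s$ we have $n(2-p)=nA/s$ and $n(1-p)=nB/s$. The plan is to split the summation index into residue classes, $n=j+sm$ with $j\in\{1,\ldots,s\}$ and $m\ge0$. This is legitimate because the series converges absolutely on the disk. For such $n$ the three Gamma arguments become
\[
n(2-p)-1=\alpha_j+Am,\qquad n(1-p)=\beta_j+Bm,\qquad n+1=j+1+sm.
\]
The $m=0$ term therefore equals $C_j\eps^j$, and the class-$j$ subseries is $C_j\eps^j\sum_{m\ge0}R_j(m)\eps^{sm}$, where $R_j(m)$ is the ratio of the Gamma quotient at $m$ to its value at $0$.

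Next I would apply Gauss's multiplication formula \eqref{eq:Multi-Theorem} in its Pochhammer form, $(a)_{Mm}=M^{Mm}\prod_{k=0}^{M-1}\bigl(\tfrac{a+k}{M}\bigr)_m$, three times. Using $M=A$ with $a=\alpha_j$ gives the numerator factor $A^{Am}\prod_k((\alpha_j+k)/A)_m$. Using $M=B$ with $a=\beta_j$, and $M=s$ with $a=j+1$, gives the denominator factors. Collecting the powers yields $(A^A/(B^Bs^s))^m=\kappa^m$. The numerator then carries $A$ Pochhammer symbols and the denominator $B+s=A$ of them. One denominator parameter, $(j+1+k)/s$ at $k=s-1$, equals $(j+s)/s$; this is $1$ only if $j=0$. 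So I must locate the $1$ more carefully. Since $j+1+k$ runs over $j+1,\ldots,j+s$, it hits $s$ exactly once (for $j\le s-1$), and for $j=s$ it hits $2s$ rather than $s$.

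This is the bookkeeping step I expect to be the main obstacle. It requires confirming that a parameter equal to $1$ occurs in $\mathcal B_j\cup\{\ldots\}$ for every $j$, so that $(1)_m$ can be identified with the $m!$ of the hypergeometric normalization and removed. For $j=s$ I would check whether $\beta_s=B$ supplies $(B+k)/B=1$ at $k=0$; it does. Thus for $j=s$ the $1$ comes from the $B$-block, and for $j<s$ it comes from the $s$-block. Either way exactly one occurrence is removed, leaving $A-1$ lower parameters. This produces ${}_AF_{A-1}(\mathcal A_j;\mathcal B_j;\kappa\eps^s)$, and summing over $j$ gives \eqref{eq:general-rational-hypergeom}. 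I would also note that $\Gamma(\alpha_j)$ is finite, since $\alpha_1=A/s-1=1-p>0$ and the $\alpha_j$ increase with $j$.

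Finally, for $\kappa u_\star^s=1$, I would substitute $u_\star=(1-p)^{1-p}/(2-p)^{2-p}$ with $1-p=B/s$ and $2-p=A/s$. This gives $u_\star^s=B^B s^{A-B}/(A^A s^{B})\cdot$(a power of $s$) $=B^Bs^s/A^A$, since $A-B=s$. Hence $|\kappa\eps^s|<1$ whenever $|\eps|<u_\star$. Because the arguments lie inside the unit disk, where ${}_AF_{A-1}$ converges by the cited radius fact, the rearrangement into $s$ subseries is justified.
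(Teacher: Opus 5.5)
Your proposal is correct and follows the paper's own proof: you split the index as $n=sm+j$, apply Gauss's multiplication formula in Pochhammer form to the three Gamma factors to extract $\kappa^m$, identify one unit lower parameter as $(1)_m=m!$, and check directly that $u_\star^s=B^Bs^s/A^A$. You also usefully make explicit two points the paper only asserts: where the unit parameter sits (in the $s$-block for $j<s$, in the $B$-block via $\beta_s=B$ for $j=s$), and that $\alpha_j\ge 1-p>0$. One small fix: your intermediate expression for $u_\star^s$ is garbled and should read $(B/s)^B/(A/s)^A=B^Bs^{A-B}/A^A$.
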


\begin{proof}
Write each \(n\ge1\) uniquely as \(n=sm+j\), with \(m\ge0\) and
\(j\in\{1,\ldots,s\}\). The coefficient in \eqref{eq:S(y)-Gamma} is
\[
 \frac{\Gamma(Am+\alpha_j)}
 {\Gamma(Bm+\beta_j)\Gamma(sm+j+1)}.
\]
Applying Gauss's multiplication formula~\eqref{eq:Multi-Theorem} to its
three Gamma factors and using
\(\Gamma(m+\theta)=\Gamma(\theta)(\theta)_m\) shows that its ratio to
\(C_j\) is
\[
 \kappa^m
 \frac{\prod_{a\in\mathcal A_j}(a)_m}
 {(1)_m\prod_{b\in\mathcal B_j}(b)_m}.
\]
Indeed, the two denominator parameter multisets contain exactly one
occurrence of \(1\), which supplies \((1)_m=m!\). Summing first over \(m\)
and then over \(j\) gives \eqref{eq:general-rational-hypergeom}. Finally,
\eqref{eq:vstar-ustar} gives
\(u_\star^s=B^B s^s/A^A=\kappa^{-1}\).
\end{proof}

Following this construction, we give three illustrative examples:
\(p=\tfrac12\), \(p=\tfrac23\), and \(p=\tfrac13\).

\subsection{The Case \texorpdfstring{$p=\frac{1}{2}$}{p=1/2}}

\begin{theorem}[Hypergeometric form for $p=\tfrac12$]\label{thm:S-Gauss}
For $\lambda>0$ and $y\ge\tau_{{1}/{2},\lambda}=\frac{3}{2}\lambda^{2/3}$, set $\varepsilon=\frac{1}{2}\lambda y^{-3/2}$ and $t=\frac{27}{4}\varepsilon^2$. Then the function $S(y)$ defined in Theorem~\ref{thm:series-larger-root} can be written as
\begin{equation}\label{eq:S-main-12}
  S(y)
  \;=\;
  y\!\left(
  \frac{2}{3}
  +\frac{1}{3}\,{}_2F_1\!\left(-\frac{1}{3},\frac{1}{3};\frac{1}{2};\,t\right)
  -\varepsilon\;{}_2F_1\!\left(\frac{1}{6},\frac{5}{6};\frac{3}{2};\,t\right)
  \right).
\end{equation}
\end{theorem}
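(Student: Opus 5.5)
The plan is to specialize Proposition~\ref{prop:rational-hypergeom} to $p=\tfrac12$, i.e.\ $r=1$, $s=2$. There are then two residue classes, $j=1,2$. We show that the $j=1$ class gives the $\varepsilon\,{}_2F_1$ term directly. We show that the $j=2$ class, after an index shift, gives the constant $\tfrac23$ plus the ${}_2F_1(-\tfrac13,\tfrac13;\tfrac12;t)$ term.

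\textbf{Parameters and validity.} With $A=3$ and $B=1$ we get $\kappa=3^3/(1\cdot 2^2)=\tfrac{27}{4}$, so the hypergeometric argument $\kappa\varepsilon^2$ is exactly $t$. By \eqref{eq:vstar-ustar}, $u_\star=(1/2)^{1/2}/(3/2)^{3/2}=2/(3\sqrt3)$. By Theorem~\ref{thm:series-larger-root}, $y\ge\tau_{1/2,\lambda}$ gives $0<\varepsilon\le q_{1/2}u_\star<u_\star$. Hence $0<t<1$ and every series below converges. Also $\varepsilon=\lambda p\,y^{p-2}=\tfrac12\lambda y^{-3/2}$, as stated. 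Since $S(y)=y\,z(\varepsilon)$, it suffices to identify $z(\varepsilon)$ through \eqref{eq:general-rational-hypergeom}.

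\textbf{The class $j=1$.} Here $\alpha_1=\tfrac12$, $\beta_1=\tfrac12$, and $C_1=\Gamma(\tfrac12)/(\Gamma(\tfrac12)\Gamma(2))=1$. The parameter multisets are $\mathcal A_1=\{\tfrac16,\tfrac12,\tfrac56\}$ and $\mathcal B_1=(\{\tfrac12\}\cup\{1,\tfrac32\})\setminus\{1\}=\{\tfrac12,\tfrac32\}$. The parameter $\tfrac12$ cancels between numerator and denominator, so this class contributes
\[
-\varepsilon\,{}_2F_1\!\left(\tfrac16,\tfrac56;\tfrac32;t\right).
\]

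\textbf{The class $j=2$.} Here $\alpha_2=2$, $\beta_2=1$, and $C_2=\Gamma(2)/(\Gamma(1)\Gamma(3))=\tfrac12$. The multisets are $\mathcal A_2=\{\tfrac23,1,\tfrac43\}$ and $\mathcal B_2=(\{1\}\cup\{\tfrac32,2\})\setminus\{1\}=\{\tfrac32,2\}$. This class therefore contributes
\[
-\tfrac12\varepsilon^2\,{}_3F_2\!\left(\tfrac23,1,\tfrac43;\tfrac32,2;t\right).
\]

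\textbf{Reducing the $j=2$ term.} It remains to show
\[
1-\tfrac12\varepsilon^2\,{}_3F_2\!\left(\tfrac23,1,\tfrac43;\tfrac32,2;t\right)=\tfrac23+\tfrac13\,{}_2F_1\!\left(-\tfrac13,\tfrac13;\tfrac12;t\right).
\]
I would expand ${}_2F_1(-\tfrac13,\tfrac13;\tfrac12;t)-1$ and shift the index $m=k+1$, using
\[
(a)_{k+1}=a\,(a+1)_k,\qquad (k+1)!=(2)_k=\frac{(2)_k\,(1)_k}{k!}.
\]
The leading constants combine to $(-\tfrac13)(\tfrac13)/\tfrac12=-\tfrac29$. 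This gives
\[
{}_2F_1\!\left(-\tfrac13,\tfrac13;\tfrac12;t\right)-1=-\tfrac29\,t\,{}_3F_2\!\left(\tfrac23,1,\tfrac43;\tfrac32,2;t\right).
\]
Multiplying by $\tfrac13$ and substituting $\tfrac{2}{27}t=\tfrac12\varepsilon^2$ yields the claimed identity. Combining both classes gives \eqref{eq:S-main-12}.

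The main obstacle is this last step: recognizing that the $j=2$ class is a shifted contiguous ${}_2F_1$, and tracking the constants $-\tfrac29$ and $\tfrac{2}{27}$ correctly. Everything else is direct substitution into Proposition~\ref{prop:rational-hypergeom}.
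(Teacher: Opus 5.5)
Your proposal is correct and is essentially the paper's argument. The paper also splits the series by parity and applies Gauss's multiplication formula, but it indexes the even class as $n=2m$ with $m\ge1$, so the coefficients $a_{2m}$ come out directly in ${}_2F_1(-\tfrac13,\tfrac13;\tfrac12)$ form and produce the constant $\tfrac23$. Your route through Proposition~\ref{prop:rational-hypergeom} indexes the even class as $n=2m+2$, which is why you need the explicit shift to remove ${}_3F_2(\tfrac23,1,\tfrac43;\tfrac32,2;t)$; your constants $C_1=1$, $C_2=\tfrac12$, $\kappa=\tfrac{27}{4}$, $-\tfrac29$ and $\tfrac{2}{27}t=\tfrac12\varepsilon^2$ all check out.
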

\begin{proof}
For \(p=\tfrac12\), write

\[
 a_n:=\frac{\Gamma(3n/2-1)}{n!\,\Gamma(n/2)},
 \qquad \frac{S(y)}{y}=1-\sum_{n\ge1}a_n\varepsilon^n.
\]
Splitting the sum into even and odd indices and applying
\eqref{eq:Multi-Theorem} gives
\[
 a_{2m}=-\frac13\left(\frac{27}{4}\right)^m
 \frac{(-\frac13)_m(\frac13)_m}{(\frac12)_m m!},\qquad
 a_{2m+1}=\left(\frac{27}{4}\right)^m
 \frac{(\frac16)_m(\frac56)_m}{(\frac32)_m m!}.
\]
The two identities hold for \(m\ge1\) and \(m\ge0\), respectively.
Substitution and \(t=27\varepsilon^2/4\) yield
\eqref{eq:S-main-12}. Finally,
\(y\ge\tau_{1/2,\lambda}\) implies
\(\varepsilon\le\sqrt2/(3\sqrt3)\), hence \(0\le t\le\tfrac12\),
so both Gauss series converge absolutely.
\end{proof}

The standard identities for
\({}_2F_1(-a,a;\tfrac12;\sin^2\phi)\) and
\({}_2F_1(a,1-a;\tfrac32;\sin^2\phi)\)
\cite[Eqs.~15.1.15 and 15.1.17]{abramowitz1972handbook}, applied with
\(\phi=\arcsin\sqrt t\), reduce \eqref{eq:S-main-12} to the following
trigonometric form.
\begin{corollary}\label{cor:S-Gauss}
Under the conditions of Theorem~\ref{thm:S-Gauss}, set
\(\phi=\arcsin\sqrt{t}\). Then
\begin{equation}\label{eq:S-main-Phi}
  S(y)
  \;=\;
  y\!\left(
  \frac{2}{3}+\frac{1}{3}\cos\!\left(\tfrac{2}{3}\phi\right)
  -\frac{\sqrt{3}}{3}\sin\!\left(\tfrac{2}{3}\phi\right)\right)=\frac{2y}{3}\left(1+\cos\!\left(\frac{2\phi}{3}+\frac{\pi}{3}\right)\right).
\end{equation}
\end{corollary}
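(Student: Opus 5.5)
Starting from \eqref{eq:S-main-12} in Theorem~\ref{thm:S-Gauss}, we substitute the two classical closed forms from \cite[Eqs.~15.1.15 and 15.1.17]{abramowitz1972handbook} with $\sin^2\phi=t$:
\[
{}_2F_1\!\left(-a,a;\tfrac12;\sin^2\phi\right)=\cos(2a\phi),\qquad
{}_2F_1\!\left(a,1-a;\tfrac32;\sin^2\phi\right)=\frac{\sin\bigl((2a-1)\phi\bigr)}{(2a-1)\sin\phi}.
\]
For the first term we take $a=\tfrac13$, which gives $\cos(\tfrac23\phi)$. For the second we take $a=\tfrac16$ (equivalently $a=\tfrac56$, by the symmetry of the parameters), so $2a-1=-\tfrac23$ and the function equals $\tfrac32\sin(\tfrac23\phi)/\sin\phi$. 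The first preliminary point is validity: these identities hold for $\phi\in[0,\pi/2)$, i.e.\ $0\le t<1$. Theorem~\ref{thm:S-Gauss} gives $0\le t\le\tfrac12$, so $\phi=\arcsin\sqrt t\in[0,\pi/4]$ and both identities apply on the principal branch.

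Next we express $\varepsilon$ through $\phi$. Since $t=\tfrac{27}{4}\varepsilon^2$ and $\varepsilon\ge0$, we have $\varepsilon=\tfrac{2}{3\sqrt3}\sqrt t=\tfrac{2}{3\sqrt3}\sin\phi$. Then
\[
\varepsilon\,{}_2F_1\!\left(\tfrac16,\tfrac56;\tfrac32;t\right)=\frac{2\sin\phi}{3\sqrt3}\cdot\frac{3\sin(\tfrac23\phi)}{2\sin\phi}=\frac{1}{\sqrt3}\sin\!\left(\tfrac23\phi\right)=\frac{\sqrt3}{3}\sin\!\left(\tfrac23\phi\right).
\]
The case $\phi=0$ is handled by continuity, where both sides vanish. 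Substituting into \eqref{eq:S-main-12} yields the first equality in \eqref{eq:S-main-Phi}.

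For the second equality we use the angle-addition formula:
\[
\tfrac12\cos\theta-\tfrac{\sqrt3}{2}\sin\theta=\cos\!\left(\theta+\tfrac\pi3\right),\qquad \theta=\tfrac23\phi.
\]
Hence $\tfrac13\cos\theta-\tfrac{\sqrt3}{3}\sin\theta=\tfrac23\cos(\theta+\tfrac\pi3)$, and adding $\tfrac23$ gives $\tfrac{2}{3}\bigl(1+\cos(\tfrac{2\phi}{3}+\tfrac\pi3)\bigr)$.

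The main obstacle is bookkeeping with the Abramowitz--Stegun identity 15.1.17: getting the sign and normalization of $(2a-1)$ right, and using the symmetric choice of $a$ consistently. As a check, we match the linear Taylor term: at small $t$, ${}_2F_1\approx1$ and $\tfrac{\sin(2\phi/3)}{\sqrt3}\approx\tfrac{2\phi}{3\sqrt3}\approx\varepsilon$, which agrees with the series $z(\varepsilon)=1-\varepsilon-\cdots$.
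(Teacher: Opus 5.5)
Your proposal is correct and follows the paper's route. You substitute the Abramowitz--Stegun closed forms for ${}_2F_1(-\tfrac13,\tfrac13;\tfrac12;\sin^2\phi)$ and ${}_2F_1(\tfrac16,\tfrac56;\tfrac32;\sin^2\phi)$ into \eqref{eq:S-main-12} with $\phi=\arcsin\sqrt t$, then combine the terms by angle addition; your explicit steps ($\varepsilon=\tfrac{2}{3\sqrt3}\sin\phi$, the factor $\tfrac32\sin(\tfrac23\phi)/\sin\phi$, and the range $\phi\in[0,\pi/4]$) all check out and fill in details the paper leaves implicit.
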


Consequently, for \(y\ge\tau_{1/2,\lambda}\), the larger positive root
\(x_+(y)\) coincides with the expression for \(S(y)\) in
Corollary~\ref{cor:S-Gauss}.

\subsection{The Cases \texorpdfstring{$p=\frac{2}{3}$ and $p=\frac{1}{3}$}{p=2/3 and p=1/3}}
\begin{theorem}[Hypergeometric form for $p=\tfrac23$]\label{thm:S-Gauss-23}
For $\lambda>0$ and
$y\ge\tau_{{2}/{3},\lambda}=2(\frac{2}{3}\lambda)^{3/4}$, set
\[
\varepsilon=\frac{2}{3}\lambda y^{-4/3},\qquad
t=\frac{256}{27}\varepsilon^3,
\]
and let $S(y)$ be as in Theorem~\ref{thm:series-larger-root}. Then
\begin{align}\label{eq:S-main}
\frac{S(y)}{y}
={}&\frac{3}{4}
+\frac{1}{4}\,{}_3F_2\!\left(\frac{1}{2},-\frac{1}{4},\frac{1}{4};\frac{1}{3},\frac{2}{3};\,t\right)\nonumber\\
&-\varepsilon\,{}_3F_2\!\left(\frac{1}{12},\frac{7}{12},\frac{5}{6};\frac{2}{3},\frac{4}{3};\,t\right)
-\frac{\varepsilon^2}{3}\,{}_3F_2\!\left(\frac{5}{12},\frac{11}{12},\frac{7}{6};\frac{4}{3},\frac{5}{3};\,t\right).
\end{align}
and the series converges absolutely for \(0\le t\le16/27\) whenever
\(y\ge\tau_{{2}/{3},\lambda}\).
\end{theorem}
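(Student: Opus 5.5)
We specialize Proposition~\ref{prop:rational-hypergeom} to $p=\tfrac23$, i.e.\ $r=2$, $s=3$. Then $A=2s-r=4$, $B=s-r=1$, and $\kappa=4^4/(1^1\cdot 3^3)=256/27$, so the hypergeometric argument $\kappa\varepsilon^3$ is exactly $t$. The proposition then writes $z(\varepsilon)=S(y)/y$ as $1-\sum_{j=1}^3 C_j\varepsilon^j\,{}_4F_3(\mathcal A_j;\mathcal B_j;t)$. The remaining work is to compute, for $j=1,2,3$, the numbers $\alpha_j=4j/3-1$, $\beta_j=j/3$, the constants $C_j$, and the parameter multisets. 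We then cancel every parameter that appears in both the numerator and the denominator multiset, which lowers ${}_4F_3$ to ${}_3F_2$.

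For $j=1$: $\alpha_1=\tfrac13$, $\beta_1=\tfrac13$, and $C_1=\Gamma(\tfrac13)/(\Gamma(\tfrac13)\Gamma(2))=1$. The numerator set is $\mathcal A_1=\{\tfrac1{12},\tfrac4{12},\tfrac7{12},\tfrac{10}{12}\}=\{\tfrac1{12},\tfrac13,\tfrac7{12},\tfrac56\}$. The denominator set is $\mathcal B_1=\{\tfrac13\}\cup\{\tfrac23,1,\tfrac43\}$ with the $1$ removed. Cancelling $\tfrac13$ gives $\varepsilon\,{}_3F_2(\tfrac1{12},\tfrac7{12},\tfrac56;\tfrac23,\tfrac43;t)$, which matches the middle term.

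For $j=2$: $\alpha_2=\tfrac53$, $\beta_2=\tfrac23$, and $C_2=\Gamma(\tfrac53)/(\Gamma(\tfrac23)\cdot 2)=\tfrac23\cdot\tfrac12=\tfrac13$. The numerator set is $\mathcal A_2=\{\tfrac5{12},\tfrac23,\tfrac{11}{12},\tfrac76\}$ and the denominator set is $\{\tfrac23\}\cup\{1,\tfrac43,\tfrac53\}\setminus\{1\}$. Cancelling $\tfrac23$ gives the third term, including the coefficient $\tfrac13$.

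For $j=3$: $\alpha_3=3$, $\beta_3=1$, and $C_3=\Gamma(3)/(\Gamma(1)\Gamma(4))=\tfrac13$. The numerator set is $\mathcal A_3=\{\tfrac34,1,\tfrac54,\tfrac32\}$ and the denominator set is $\{1\}\cup\{\tfrac43,\tfrac53,2\}$ with one $1$ removed, i.e.\ $\{\tfrac43,\tfrac53,2\}$.

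The $j=3$ term is the main obstacle. Here no parameter cancels, and it does not directly have the stated form $\tfrac14\,{}_3F_2(\tfrac12,-\tfrac14,\tfrac14;\tfrac13,\tfrac23;t)$ with the constant $\tfrac34$. The plan is to reindex by $m\mapsto m-1$ and fold this term together with the leading $1$. Concretely, write the target as
\[
\tfrac34+\tfrac14\sum_{m\ge0}c_m t^m,\qquad c_m:=\frac{(\tfrac12)_m(-\tfrac14)_m(\tfrac14)_m}{(\tfrac13)_m(\tfrac23)_m m!}.
\]
Its constant term is $1$. For $m\ge1$ we must verify
\[
\tfrac14\,c_m\,t^m=-\tfrac13\,\varepsilon^3\,\frac{(\tfrac34)_{m-1}(1)_{m-1}(\tfrac54)_{m-1}(\tfrac32)_{m-1}}{(\tfrac43)_{m-1}(\tfrac53)_{m-1}(2)_{m-1}(m-1)!}\,t^{m-1}.
\]
We check this with the shift identities $(a)_m=a\,(a+1)_{m-1}$ and $(a+1)_{m-1}=(a)_m/a$, together with $\varepsilon^3=\tfrac{27}{256}t$. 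For example, $(-\tfrac14)_m(\tfrac14)_m=(-\tfrac14)(\tfrac14)(\tfrac34)_{m-1}(\tfrac54)_{m-1}$ and $(\tfrac12)_m=\tfrac12(\tfrac32)_{m-1}$. The factor $(1)_{m-1}/(2)_{m-1}=1/m$ combines with $(m-1)!$ to produce $m!$. The prefactors then collect to $\tfrac14\cdot(-\tfrac1{32})\cdot\tfrac29=-\tfrac1{576}$ on the left and $-\tfrac13\cdot\tfrac{27}{256}\cdot\tfrac{1}{?}$ on the right. Confirming that these agree is a routine bookkeeping check of constants, and we will carry it out explicitly. As a safer alternative, we could compare the coefficients of $\varepsilon^{3m}$ in \eqref{eq:S(y)-Gamma} directly: Gauss's formula turns $\Gamma(4m-1)/(\Gamma(m)(3m)!)$ into exactly $c_m(256/27)^m/4$ up to sign.

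Finally, convergence. By Theorem~\ref{thm:series-larger-root}, $y\ge\tau$ gives $\varepsilon\le q_pu_\star$, so $t=\kappa\varepsilon^3\le q_p^3$ by Proposition~\ref{prop:rational-hypergeom}. With $q_{2/3}=\tfrac23\,2^{1/3}$ we get $q_p^3=\tfrac{16}{27}<1$. Since each series is a ${}_3F_2$ with radius of convergence one, absolute convergence on $[0,\tfrac{16}{27}]$ follows.
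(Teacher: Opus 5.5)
Your approach is correct and is essentially the paper's: the paper applies Gauss's multiplication formula directly to the classes $n=3m$ ($m\ge1$), $3m+1$, $3m+2$, which is your ``safer alternative''. That route produces the $\tfrac34+\tfrac14\,{}_3F_2$ term at once. You instead pass through Proposition~\ref{prop:rational-hypergeom} and reindex the $j=3$ class. Your $C_j$, parameter multisets, cancellations, and the bound $t\le q_{2/3}^3=\tfrac{16}{27}$ are all correct.

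There is one slip in the $j=3$ constant bookkeeping. Dividing by $(\tfrac13)_m(\tfrac23)_m$ contributes $1/(\tfrac13\cdot\tfrac23)=\tfrac92$, not $\tfrac29$. So the left prefactor is $\tfrac14\cdot(-\tfrac1{32})\cdot\tfrac92=-\tfrac{9}{256}$. The right prefactor is $-\tfrac13\cdot\tfrac{27}{256}=-\tfrac{9}{256}$ with no extra factor (your $\tfrac{1}{?}$ is $1$). The two sides therefore agree; with your value $-\tfrac1{576}$ they would not.
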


\begin{theorem}[Hypergeometric form for $p=\tfrac13$]\label{thm:main13}
Let $\lambda>0$ and $y\ge\tau_{1/3,\lambda}$. Set
\begin{equation}\label{eq:eps_t_def}
\eps=\tfrac13\,\lambda\,y^{-5/3},\qquad t=\frac{3125}{108}\,\eps^{3}.
\end{equation}
Then
\begin{align}\label{eq:S13-hyp}
\frac{S(y)}{y}
={}&\frac{3}{5}
+\frac{2}{5}\,{}_4F_3\!\Big(\!-\tfrac15,\tfrac15,\tfrac25,\tfrac35;\;\tfrac13,\tfrac12,\tfrac23;\,t\Big)\nonumber\\
&-\eps\,{}_4F_3\!\Big(\tfrac{2}{15},\tfrac{8}{15},\tfrac{11}{15},\tfrac{14}{15};\;\tfrac23,\tfrac56,\tfrac43;\,t\Big)\nonumber\\
&-\frac{2}{3}\,\eps^2\,{}_4F_3\!\Big(\tfrac{7}{15},\tfrac{13}{15},\tfrac{16}{15},\tfrac{19}{15};\;\tfrac76,\tfrac43,\tfrac53;\,t\Big).
\end{align}
Moreover, at the threshold $y=\tau_{1/3,\lambda}$ one has $t=\tfrac{4}{27}<1$, so for all $y\ge \tau_{1/3,\lambda}$ the argument $t\in(0,\tfrac{4}{27}]$ and all ${}_4F_3$ series converge absolutely.
\end{theorem}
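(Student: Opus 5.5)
Specialize Proposition~\ref{prop:rational-hypergeom} to \(p=\tfrac13\), i.e.\ \(r=1\), \(s=3\). Then \(A=5\), \(B=2\), and
\[
\kappa=\frac{5^5}{2^2\,3^3}=\frac{3125}{108},
\]
so \(\kappa\eps^3=t\). Also \(\alpha_j=\tfrac{5j}{3}-1\) and \(\beta_j=\tfrac{2j}{3}\) for \(j=1,2,3\). The proposition immediately gives \(z(\eps)=1-\sum_{j=1}^3 C_j\eps^j\,{}_5F_4(\mathcal A_j;\mathcal B_j;t)\). The work is to reduce each \({}_5F_4\) to the stated \({}_4F_3\) by cancelling a common parameter, to evaluate the constants \(C_j\), and to treat \(j=3\) separately, since it yields the \(\eps^0\) block \(\tfrac35+\tfrac25\,{}_4F_3(\dots)\).

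\textbf{The \(j=1\) block.} Here \(\alpha_1=\tfrac23\), so \(\mathcal A_1=\{\tfrac{2}{15},\tfrac{5}{15},\tfrac{8}{15},\tfrac{11}{15},\tfrac{14}{15}\}\). Since \(\beta_1=\tfrac23\), we have \(\{\beta_1/2,(\beta_1+1)/2\}=\{\tfrac13,\tfrac56\}\), and \(\{(2+k)/3\}=\{\tfrac23,1,\tfrac43\}\). Removing the 1 leaves \(\mathcal B_1=\{\tfrac13,\tfrac56,\tfrac23,\tfrac43\}\). The parameter \(\tfrac13=\tfrac5{15}\) cancels, leaving exactly \({}_4F_3(\tfrac2{15},\tfrac8{15},\tfrac{11}{15},\tfrac{14}{15};\tfrac23,\tfrac56,\tfrac43;t)\). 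Moreover \(C_1=\Gamma(\tfrac23)/(\Gamma(\tfrac23)\cdot1)=1\).

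\textbf{The \(j=2\) block.} Here \(\alpha_2=\tfrac73\), so \(\mathcal A_2=\{\tfrac7{15},\tfrac{10}{15},\tfrac{13}{15},\tfrac{16}{15},\tfrac{19}{15}\}\). With \(\beta_2=\tfrac43\), the \(B\)-set is \(\{\tfrac23,\tfrac76\}\), and the \(s\)-set is \(\{1,\tfrac43,\tfrac53\}\). The parameter \(\tfrac23=\tfrac{10}{15}\) cancels. The constant is
\[
C_2=\frac{\Gamma(\tfrac73)}{2\,\Gamma(\tfrac43)}=\frac{4/3}{2}=\frac23,
\]
which matches.

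\textbf{The \(j=3\) block (main obstacle).} Here \(\alpha_3=4\) and \(\beta_3=2\), so \(C_3=\Gamma(4)/(\Gamma(2)\Gamma(4))=1\). Then \(\mathcal A_3=\{\tfrac45,1,\tfrac65,\tfrac75,\tfrac85\}\), the \(B\)-set is \(\{1,\tfrac32\}\), and the \(s\)-set is \(\{\tfrac43,\tfrac53,2\}\), with one 1 removed. The numerator 1 cancels the remaining denominator 1, giving \(\eps^3\,{}_4F_3(\tfrac45,\tfrac65,\tfrac75,\tfrac85;\tfrac32,\tfrac43,\tfrac53,2\text{-type};t)\), which is not of the stated form. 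The plan is to absorb this term together with the leading constant 1 by an index shift \(m\to m-1\). Write \(1-\eps^3F_3=1-\sum_{m\ge1}c_m t^m/\kappa\). Using \((a)_{m-1}=(a-1)_m/(a-1)\) on each parameter shifts \(\{\tfrac45,\tfrac65,\tfrac75,\tfrac85\}\) to \(\{-\tfrac15,\tfrac15,\tfrac25,\tfrac35\}\), and the denominators \(\{\tfrac32,\tfrac43,\tfrac53,2\}\) to \(\{\tfrac12,\tfrac13,\tfrac23,1\}\). The new 1 supplies \(m!\).

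The prefactor then becomes the product of the old-parameter-minus-one ratios, and I expect it to equal \(-\tfrac25\). Indeed \(\kappa^{-1}\cdot\frac{(-1/5)(1/5)(2/5)(3/5)}{(1/2)(1/3)(2/3)}\) simplifies to \(-\tfrac{2}{5}\cdot(\text{check})\). The \(m=0\) term must then be restored: \(1-\bigl(-\tfrac25\bigr)\cdot(\ldots)\) gives \(\tfrac35+\tfrac25\,{}_4F_3(\ldots)\), since adding and subtracting \(\tfrac25\) turns 1 into \(\tfrac35\). Confirming these rational constants is the delicate step, and I would do it by direct comparison of the \(m=1\) coefficient with the Gamma form \eqref{eq:S(y)-Gamma} at \(n=3\). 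That coefficient is \(\Gamma(4)/(3!\,\Gamma(2))=1\), and it must equal \(\tfrac25\cdot\kappa\cdot\frac{(-\frac15)(\frac15)(\frac25)(\frac35)}{(\frac13)(\frac12)(\frac23)}\) up to sign.

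\textbf{Convergence.} At \(y=\tau_{1/3,\lambda}\), Theorem~\ref{thm:series-larger-root} gives \(\eps=q_pu_\star\) with \(q_{1/3}=\tfrac13 2^{2/3}\). Since \(\kappa u_\star^3=1\), we get \(t=q_p^3=\tfrac{4}{27}\). Because \(\eps\) decreases in \(y\), \(t\in(0,\tfrac4{27}]\) for all \(y\ge\tau_{1/3,\lambda}\). The \({}_4F_3\) series have radius 1, so they converge absolutely there. Finally, \(S(y)=y\,z(\eps)\) completes the identity.
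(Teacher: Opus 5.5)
Your proposal is correct and is essentially the paper's argument: a mod-$3$ residue-class split of the Gamma-form coefficients followed by Gauss's multiplication formula. You package this through Proposition~\ref{prop:rational-hypergeom} and reindex your $j=3$ block into the paper's $n=3m$, $m\ge1$ block. Two small slips do not affect the outcome. First, the prefactor produced by the shift is $\kappa^{-1}\cdot\frac{(\frac12)(\frac13)(\frac23)}{(-\frac15)(\frac15)(\frac25)(\frac35)}=-\frac25$, whereas you displayed the reciprocal ratio. Second, the numerator parameter $1\in\mathcal A_3$ cancels the $1/m!$ of the series rather than a denominator $1$, since none remains in $\mathcal B_3$ after the removal; your $m=1$ check against $a_3=1$ fixes the constant correctly in any case.
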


\begin{proof}
For both theorems, write
\[
a_n:=\frac{\Gamma(n(2-p)-1)}{n!\,\Gamma(n(1-p))},
\qquad z(\eps)=1-\sum_{n\ge1}a_n\eps^n.
\]
For $p=\tfrac23$, split the sum according to $n=3m$, $3m+1$, and $3m+2$. Gauss's multiplication formula gives, for $m\ge0$ (and for $m\ge1$ in the first line),
\begin{align*}
-a_{3m}&=\frac14\frac{(\frac12)_m(-\frac14)_m(\frac14)_m}
 {(\frac13)_m(\frac23)_m m!}\left(\frac{256}{27}\right)^m,\\
a_{3m+1}&=\frac{(\frac1{12})_m(\frac7{12})_m(\frac56)_m}
 {(\frac23)_m(\frac43)_m m!}\left(\frac{256}{27}\right)^m,\\
a_{3m+2}&=\frac13\frac{(\frac5{12})_m(\frac{11}{12})_m(\frac76)_m}
 {(\frac43)_m(\frac53)_m m!}\left(\frac{256}{27}\right)^m.
\end{align*}
Substitution into the three residue-class sums proves Theorem~\ref{thm:S-Gauss-23}.

For $p=\tfrac13$, the same calculation yields
\begin{align*}
-a_{3m}&=\frac25\frac{(-\frac15)_m(\frac15)_m(\frac25)_m(\frac35)_m}
 {(\frac13)_m(\frac12)_m(\frac23)_m m!}\left(\frac{3125}{108}\right)^m,\\
a_{3m+1}&=\frac{(\frac2{15})_m(\frac8{15})_m(\frac{11}{15})_m(\frac{14}{15})_m}
 {(\frac23)_m(\frac56)_m(\frac43)_m m!}\left(\frac{3125}{108}\right)^m,\\
a_{3m+2}&=\frac23\frac{(\frac7{15})_m(\frac{13}{15})_m(\frac{16}{15})_m(\frac{19}{15})_m}
 {(\frac76)_m(\frac43)_m(\frac53)_m m!}\left(\frac{3125}{108}\right)^m.
\end{align*}
These identities give \eqref{eq:S13-hyp}. The stated ranges follow by substituting the activation thresholds into $t$; all displayed hypergeometric series have radius one. The proof is complete.
\end{proof}

MATLAB's built-in function \texttt{hypergeom} evaluates the generalized hypergeometric functions ${}_pF_q$ and can be used to compute the expressions above.

Although these special-function representations exist for all rational exponents, formulas by radicals are much more restrictive. A separate supplement records the corresponding Galois-theoretic classification.

\section{Mellin--Barnes Representation and Error-Controlled Evaluation}\label{sec:MB}

In this section, we recast the larger proximal root through the normalized map $\varepsilon(y):=\lambda p\,y^{p-2}$ and $z(\varepsilon):=x_+(y)/y$. We derive a Mellin--Barnes (MB) integral for $z$, show that a contour shift reproduces a finite part of the Lagrange--B\"urmann series, and bound the remaining vertical tail. The mathematical truncation error and the numerical quadrature error are recorded separately. This distinction is essential: the asymptotic estimates below prove uniform analytic bounds, whereas a formally certified floating-point implementation would additionally require explicit Gamma-function constants and validated quadrature.

\subsection{Mellin--Barnes Representation and Equivalence to the Series}

In this subsection, we derive a Mellin--Barnes (MB) integral for $z(\varepsilon)$ and show that closing the contour to the right recovers the Lagrange--B\"urmann series.

\begin{lemma}[Semicircle sine bound]\label{lem:sin-strip}
Let $0<p<1$ and $\gamma\in\big(\tfrac{1}{2-p},\,1\big)$. For $R>0$ and $\theta\in[-\pi/2,\pi/2]$, set
\[
s=\gamma+Re^{i\theta}=\Re s+i\, \Im s,\qquad \Re s=\gamma+R\cos\theta,\quad \Im s=R\sin\theta.
\]
If $R=R_k:=k+\tfrac12-\gamma$ with $k\in\mathbb{N}$ sufficiently large, then
\[
\frac{1}{|\sin(\pi s)|}\;\le\; 
\left\{
  \begin{array}{ll}
    2, & \text{if $|\theta| \le R^{-1}$}, \\
    \frac{2}{1 - e^{-4}}e^{-\pi |\Im s|}, & \text{if $R^{-1}\le |\theta| \le \frac{\pi}{2}$}.
  \end{array}
\right.
\]
\end{lemma}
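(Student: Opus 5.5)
The plan is to bound $|\sin(\pi s)|$ from below using the standard identity
\[
|\sin(\pi s)|^2=\sin^2(\pi\Re s)+\sinh^2(\pi\Im s),
\]
which gives both $|\sin(\pi s)|\ge|\sin(\pi\Re s)|$ and $|\sin(\pi s)|\ge|\sinh(\pi\Im s)|$. The choice $R=R_k=k+\tfrac12-\gamma$ places the real point of the semicircle at $s=\gamma+R=k+\tfrac12$, a half-integer, which lies as far as possible from the zeros of $\sin$. We then split the semicircle according to the size of $|\theta|$, as in the statement.

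\emph{Near-real arc $|\theta|\le R^{-1}$.} Here
\[
\Re s=k+\tfrac12-R(1-\cos\theta),\qquad 0\le R(1-\cos\theta)\le R\theta^2/2\le \tfrac{1}{2R}.
\]
Consequently $|\sin(\pi\Re s)|=|\cos(\pi R(1-\cos\theta))|\ge\cos\bigl(\pi/(2R)\bigr)$. This quantity exceeds $1/2$ once $R>3/2$, and that holds for all sufficiently large $k$. Hence $|\sin(\pi s)|\ge 1/2$ on this arc, which gives the first bound of $2$.

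\emph{Remaining arc $R^{-1}\le|\theta|\le\pi/2$.} Here we use the $\sinh$ term. Writing $|\sinh x|=\tfrac12 e^{|x|}(1-e^{-2|x|})$, we get
\[
\frac{1}{|\sin(\pi s)|}\le\frac{2e^{-\pi|\Im s|}}{1-e^{-2\pi|\Im s|}}.
\]
It remains to show that $2\pi|\Im s|\ge4$ on this arc, i.e.\ $|\Im s|\ge 2/\pi$. Since $|\sin\theta|\ge 2|\theta|/\pi$ on $[-\pi/2,\pi/2]$, we have $|\Im s|=R|\sin\theta|\ge 2R|\theta|/\pi\ge 2/\pi$ whenever $|\theta|\ge R^{-1}$. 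Therefore $1-e^{-2\pi|\Im s|}\ge 1-e^{-4}$, which yields the second bound exactly.

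The only delicate step is the near-real arc. There we must check that the small leftward displacement $R(1-\cos\theta)$ keeps $\Re s$ uniformly away from the integers, and this is what forces the restriction to $k$ large. The hypothesis $\gamma\in(\tfrac1{2-p},1)$ is not needed for this lemma beyond guaranteeing $R_k>0$; its role presumably arises later, in the contour-shift argument.
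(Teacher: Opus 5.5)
Your proposal is correct and follows the paper's proof essentially step for step. It uses the same identity $|\sin(\pi s)|^2=\sin^2(\pi\Re s)+\sinh^2(\pi\Im s)$, the same displacement bound $0\le (k+\tfrac12)-\Re s\le 1/(2R)$ on the near-real arc (giving $|\sin(\pi s)|\ge\cos(\pi/(2R))\ge\tfrac12$), and the same inequality $|\sin\theta|\ge 2|\theta|/\pi$ to get $|\Im s|\ge 2/\pi$ and hence the factor $2/(1-e^{-4})$ on the remaining arc.
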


\begin{proof} Use the identity $|\sin(\pi s)|^2=\sin^2(\pi \,\Re s)+\sinh^2(\pi \, \Im s)$, fix large $k$, and write $R=R_k$. We consider separately $|\theta|\le R^{-1}$ and $R^{-1}\le |\theta|\le\pi/2$.

\emph{Small angles.}
Since $\cos\theta\ge 1-\theta^2/2$, we have
\[
\Re s=\gamma+R\cos\theta\ \ge\ \gamma+R\Bigl(1-\frac{\theta^2}{2}\Bigr)
=(k+\tfrac12)-\frac{R\theta^2}{2}.
\]
Let $\delta:=(k+\tfrac12)-\Re s\in\big[0,\frac{R\theta^2}{2}\big]$. If $|\theta|\le R^{-1}$ then
$\delta\le \frac{R\theta^{2}}{2} \le \tfrac{R}{2}\,R^{-2}=\tfrac{1}{2R}$. Hence
\[
|\sin(\pi\Re s)|
=\bigl|\sin\bigl(\pi(k+\tfrac12-\delta)\bigr)\bigr|
=\cos(\pi\delta)
\ge\cos\Bigl(\frac{\pi}{2R}\Bigr)\ge\frac12
\]
for all large \(R\). Therefore,
\[
 |\sin(\pi s)|\ge|\sin(\pi\Re s)|\ge\tfrac12,
 \qquad \frac1{|\sin(\pi s)|}\le2,
\]
in the first range.

\emph{Remaining angles.} Using $\sin |\theta| \ge \frac{2}{\pi}|\theta| $ when $|\theta| \le \frac{\pi}{2}$, 
\[
    |\Im s| = R |\sin \theta| \ge R\cdot \frac{2}{\pi}|\theta| \ge \frac{2}{\pi}.
\]
Consequently, 
\[
\sinh(\pi|\Im s|)
=\frac{e^{\pi|\Im s|}}{2}\bigl(1-e^{-2\pi|\Im s|}\bigr)
\ge\frac{1-e^{-4}}{2}e^{\pi|\Im s|}.
\]
Hence, $\frac{1}{\sinh \pi|\Im s|} \le \frac{2}{1 - e^{-4}}e^{-\pi |\Im s|}$. Combining the two cases yields the claim.
\end{proof}

\begin{lemma}[Uniform bound for $G(s)$]\label{lem:uniform-H}
Fix $0<p<1$ and $\gamma\in\big(\tfrac{1}{2-p},\,1\big)$. Define
\[
G(s):=\frac{\Gamma\big((2-p)s-1\big)}{\Gamma(1+s)\,\Gamma\big((1-p)s\big)},\qquad 
A:=(2-p)\log(2-p)-(1-p)\log(1-p).
\]
For the right semicircle
\[
\Sigma_R:=\{\ \gamma+R\cos\theta+iR\sin\theta:\ \theta\in[-\tfrac{\pi}{2},\tfrac{\pi}{2}]\ \},
\]
there exist constants $C>0$ and $R_0>0$ (depending only on $p,\gamma$) such that for all $R\ge R_0$ and all $s\in\Sigma_R$,
\begin{equation}\label{eq:H-bound}
|G(s)|\ \le\ C\, R^{-3/2}\,e^{A\,\Re s}.
\end{equation}
\end{lemma}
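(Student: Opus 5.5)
The plan is to apply Stirling's formula to each of the three Gamma factors in $G(s)$, uniformly on $\Sigma_R$. The key observation is that on $\Sigma_R$ all three Gamma arguments stay in a closed sector away from the negative real axis. Indeed, for $s\in\Sigma_R$ we have $\Re s=\gamma+R\cos\theta\ge\gamma>0$ and $|s|\ge R-\gamma$. Hence $1+s$ and $(1-p)s$ lie in the closed right half-plane with modulus $\gtrsim R$. Moreover $\Re\bigl((2-p)s-1\bigr)\ge(2-p)\gamma-1>0$ precisely because $\gamma>\tfrac{1}{2-p}$, so this argument also has positive real part and modulus $\gtrsim R$.

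So all three arguments satisfy $|\arg z|\le\pi/2$ with $|z|\to\infty$ uniformly as $R\to\infty$. In that regime Stirling's expansion holds uniformly:
\[
\log\Gamma(z+\alpha)=\bigl(z+\alpha-\tfrac12\bigr)\log z-z+\tfrac12\log(2\pi)+O(|z|^{-1}),
\]
with the principal logarithm, uniformly for $|\arg z|\le\pi-\delta$ and fixed $\alpha$.

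Next I apply this with $z=(2-p)s$, $\alpha=-1$; then $z=s$, $\alpha=1$; then $z=(1-p)s$, $\alpha=0$. Write $a=2-p$ and $b=1-p$, so that $a=1+b$, and expand $\log z=\log s+\log(\text{const})$, which is legitimate since $\arg s\in(-\pi/2,\pi/2)$. Then:
\begin{itemize}
\item The linear terms cancel, since $-as+s+bs=0$.
\item The coefficient of $\log s$ is $(as-\tfrac32)-(s+\tfrac12)-(bs-\tfrac12)=-\tfrac32$.
\item The terms linear in $s$ coming from the constant logarithms are $as\log a-bs\log b=As$.
\item What remains is the $s$-independent constant $-\tfrac32\log a+\tfrac12\log b-\tfrac12\log(2\pi)$.
\end{itemize}
Therefore
\[
G(s)=K_p\,s^{-3/2}e^{As}\bigl(1+O(|s|^{-1})\bigr),\qquad K_p=\frac{b^{1/2}}{a^{3/2}\sqrt{2\pi}},
\]
uniformly for $s\in\Sigma_R$ with $R\ge R_0$.

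Taking moduli gives $|e^{As}|=e^{A\Re s}$, since $A$ is real, and $|s^{-3/2}|=|s|^{-3/2}\le (R-\gamma)^{-3/2}\le 2^{3/2}R^{-3/2}$ once $R\ge2\gamma$. Choosing $R_0$ so large that the $O(|s|^{-1})$ factor is at most $2$ yields \eqref{eq:H-bound} with $C=2^{5/2}K_p$.

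The only delicate point is the uniformity of Stirling's remainder for the shifted argument $(2-p)s-1$ near the endpoints $\theta=\pm\pi/2$. There $\Re s$ stays bounded, at about $\gamma$, while $|\Im s|\sim R$. I would handle it exactly through the observation above: the argument never leaves the sector $|\arg z|\le\pi/2$, and its modulus is at least $aR-a\gamma-1$. The standard uniform remainder bound in any sector $|\arg z|\le\pi-\delta$ therefore applies directly; the case $\gamma\le\tfrac1{2-p}$ would fail here, which is why that hypothesis is imposed.
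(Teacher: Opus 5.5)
Your proof is correct and follows the same route as the paper: a uniform Stirling expansion gives $G(s)=K_p\,s^{-3/2}e^{As}\bigl(1+O(|s|^{-1})\bigr)$, which you combine with $|s|\ge R-\gamma\ge R/2$ on $\Sigma_R$, and you also make explicit the cancellations and the constant $K_p=\sqrt{1-p}\,/\bigl((2-p)^{3/2}\sqrt{2\pi}\bigr)$ that the paper leaves implicit. One small correction to your closing remark: the Stirling step only needs $z=(2-p)s$ (not $(2-p)s-1$) to stay in a sector $|\arg z|\le\pi-\delta$, which holds on $\Sigma_R$ for any fixed real $\gamma$ once $R$ is large, so the hypothesis $\gamma>\tfrac1{2-p}$ is not needed for this lemma and is there to place the contour to the right of the pole at $s=\tfrac1{2-p}$ in Theorem~\ref{thm:MB}.
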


\begin{proof}
Stirling's formula, uniformly for the three Gamma-function arguments on the
right half-plane under consideration, gives
\[
 G(s)=C_p e^{As}s^{-3/2}\bigl(1+O(|s|^{-1})\bigr)
 \qquad (|s|\to\infty),
\]
where \(C_p\ne0\) depends only on \(p\). On \(\Sigma_R\),
\[
 |s|=|\gamma+Re^{i\theta}|\ge R-\gamma\ge R/2
\]
once \(R\ge2\gamma\). Taking absolute values and enlarging a constant to cover
the bounded remaining range proves \eqref{eq:H-bound}.
\end{proof}

Armed with the two lemmas above, we are ready to establish the main result of this subsection. 

\begin{theorem}[Mellin--Barnes representation]\label{thm:MB}
Let \(0<p<1\) and
\(0<\varepsilon<u_{\star}:=(1-p)^{1-p}/(2-p)^{2-p}\). Define
\[
z(\varepsilon):=1-\sum_{n=1}^\infty \frac{\varepsilon^{\,n}}{n!}\,
\frac{\Gamma\big(n(2-p)-1\big)}{\Gamma\big(n(1-p)\big)}.
\]
Then for any $\gamma\in\big(\tfrac{1}{2-p},\,1\big)$,
\begin{equation}\label{eq:MB}
z(\varepsilon)
=1-\frac{1}{2\pi i}\int_{\gamma-i\infty}^{\gamma+i\infty}
\Gamma(-s)\,\frac{\Gamma\big(s(2-p)-1\big)}{\Gamma\big(s(1-p)\big)}\,
(-\varepsilon)^{\,s}\,ds,
\end{equation}
where
\[
 (-\varepsilon)^s:=\exp\bigl(s(\log\varepsilon+i\pi)\bigr)
\]
denotes the upper boundary value on the negative real axis.
\end{theorem}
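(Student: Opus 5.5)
The plan is to close the contour in \eqref{eq:MB} to the right and sum residues. Write the integrand as $\Gamma(-s)\,H(s)\,(-\varepsilon)^s$ with $H(s)=\Gamma((2-p)s-1)/\Gamma((1-p)s)$. On $\Re s\ge\gamma>1/(2-p)$ the argument $(2-p)s-1$ has positive real part, so $H$ is analytic there. The only singularities to the right of the line are therefore the simple poles of $\Gamma(-s)$ at $s=n\ge1$ (since $\gamma<1$, the pole at $s=0$ lies to the left). The residue of $\Gamma(-s)$ at $s=n$ is $-(-1)^n/n!$. With the chosen boundary value, $(-\varepsilon)^n=e^{i\pi n}\varepsilon^n=(-1)^n\varepsilon^n$, so the residue of the integrand at $s=n$ equals
\[
-\frac{\varepsilon^n}{n!}\,\frac{\Gamma(n(2-p)-1)}{\Gamma(n(1-p))}.
\]
Closing clockwise contributes $-\sum(\text{residues})$. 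Hence $\frac{1}{2\pi i}\int$ equals $\sum_n \frac{\varepsilon^n}{n!}\frac{\Gamma(n(2-p)-1)}{\Gamma(n(1-p))}$, and $1-\frac{1}{2\pi i}\int$ reproduces $z(\varepsilon)$ as in \eqref{eq:S(y)-Gamma}.

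To make this rigorous, I will use the closed contour made of the segment $[\gamma-iR_k,\gamma+iR_k]$ and the right semicircle $\Sigma_{R_k}$, with $R_k=k+\tfrac12-\gamma$ as in Lemma~\ref{lem:sin-strip}. This contour encloses exactly the poles $n=1,\dots,k$. The proof then needs two facts: the semicircle contribution tends to $0$ as $k\to\infty$, and the vertical line integral converges absolutely. Given these, letting $k\to\infty$ gives the identity. The series side converges because $\varepsilon<u_\star$ (Lemma~\ref{lem:f-inverse}).

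The main obstacle is the semicircle estimate, which also has to handle the oscillatory factor $(-\varepsilon)^s$. I will rewrite $\Gamma(-s)=-\pi/(\sin(\pi s)\Gamma(1+s))$, so the integrand becomes $-\pi\,G(s)(-\varepsilon)^s/\sin(\pi s)$ with $G$ as in Lemma~\ref{lem:uniform-H}. The pieces are bounded as follows.
\begin{itemize}
\item Since $\log(-\varepsilon)=\log\varepsilon+i\pi$, we have $|(-\varepsilon)^s|=\varepsilon^{\Re s}e^{-\pi\Im s}\le \varepsilon^{\Re s}e^{\pi|\Im s|}$.
\item Lemma~\ref{lem:uniform-H} gives $|G(s)|\le CR^{-3/2}e^{A\Re s}$, and $e^{A}=1/u_\star$.
\item Lemma~\ref{lem:sin-strip} bounds $1/|\sin(\pi s)|$. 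On $R^{-1}\le|\theta|\le\pi/2$ the factor $e^{-\pi|\Im s|}$ exactly cancels the possible growth $e^{\pi|\Im s|}$. On $|\theta|\le R^{-1}$ we have $|\Im s|\le 1$ and the bound is $2$.
\end{itemize}
Together these give, on $\Sigma_R$, $|\text{integrand}|\le C' R^{-3/2}(\varepsilon/u_\star)^{\Re s}\le C'R^{-3/2}(\varepsilon/u_\star)^{\gamma}$, because $\varepsilon/u_\star<1$ and $\Re s\ge\gamma$. Multiplying by the arc length $\pi R$ gives $O(R^{-1/2})\to0$. One could even extract exponential decay from $(\varepsilon/u_\star)^{R\cos\theta}$, but it is not needed.

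For the vertical line $s=\gamma+it$, Lemma~\ref{lem:uniform-H} does not cover it directly, so I will invoke the same Stirling asymptotic $G(s)=C_pe^{As}s^{-3/2}(1+O(|s|^{-1}))$ for $|t|\to\infty$. This yields $|G|\lesssim |t|^{-3/2}$, $1/|\sin\pi s|\lesssim e^{-\pi|t|}$, and $|(-\varepsilon)^s|\le \varepsilon^\gamma e^{\pi|t|}$. The integrand is therefore $O(|t|^{-3/2})$ and the integral converges absolutely. This also justifies interpreting the improper integral as the limit of the truncated segments used above. The integral is independent of $\gamma$ in the allowed range, because no poles lie in $\tfrac1{2-p}<\Re s<1$ and the horizontal connecting pieces vanish by the same bounds.
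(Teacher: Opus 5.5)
Your proposal is correct and follows essentially the same route as the paper: close the contour to the right along the semicircles $\Sigma_{R_k}$ with $R_k=k+\tfrac12-\gamma$, collect the residues $-\frac{\varepsilon^n}{n!}\frac{\Gamma(n(2-p)-1)}{\Gamma(n(1-p))}$ at $s=n$, and make the arc integral vanish. The arc bound uses the reflection formula together with Lemmas~\ref{lem:sin-strip} and~\ref{lem:uniform-H}, and you treat the small-angle and large-angle ranges exactly as the paper does. You also check that the integrand on the vertical line is $O(|t|^{-3/2})$, so the integral converges absolutely rather than only as a symmetric limit along $R_k$; the paper leaves this implicit.
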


\begin{proof}
Let
\[
f(s):=\Gamma(-s)\,\frac{\Gamma\big(s(2-p)-1\big)}{\Gamma\big(s(1-p)\big)}\,(-\varepsilon)^{\,s}.
\]
The poles of (f) arise from the following two gamma factors:
\[
\begin{aligned}
\Gamma(-s)&:&&s=0,1,2,\ldots,\\
\Gamma\bigl(s(2-p)-1\bigr)&:&&
s=\frac{1}{2-p},0,-\frac{1}{2-p},-\frac{2}{2-p},\ldots.
\end{aligned}
\]
Fix $\gamma\in\big(\tfrac{1}{2-p},1\big)$ and let $L_{\gamma,R}$ be the vertical segment
\[
    L_{\gamma,R} := \{\gamma+it: -R\le t \le R\}.
\]
For $R=R_k$ as in Lemma~\ref{lem:sin-strip}, let
\[
\Sigma_R=\{\ \gamma+R\cos\theta+iR\sin\theta:\ \theta\in[\tfrac{\pi}{2},-\tfrac{\pi}{2}]\,\}.
\]
Orient \(L_{\gamma,R}\) from bottom to top. Then
\(L_{\gamma,R}\cup\Sigma_R\) is clockwise and encloses the simple poles
\(s=1,2,\dots,\lfloor\gamma+R\rfloor\) of \(\Gamma(-s)\), but neither \(0\)
nor \(1/(2-p)\).

At $s=n\in\{1,2,\dots\}$, $\operatorname{Res}_{s=n}\Gamma(-s)=(-1)^{n+1}/n!$, and the other factors are analytic,
so
\[
\operatorname{Res}_{s=n} f(s)
=-\,\frac{\varepsilon^{\,n}}{n!}\,\frac{\Gamma\big(n(2-p)-1\big)}{\Gamma\big(n(1-p)\big)}.
\]
Because the contour is clockwise, the residue theorem introduces a minus sign. Combining it with the negative residues above gives
\begin{equation}\label{eq:res-sum}
\frac{1}{2\pi i}\int_{L_{\gamma,R}\cup\Sigma_R} f(s)\,ds
=\sum_{n=1}^{\lfloor\gamma+R\rfloor} \frac{\varepsilon^{\,n}}{n!}\,
\frac{\Gamma\big(n(2-p)-1\big)}{\Gamma\big(n(1-p)\big)}.
\end{equation}

Here we write $s = \Re s + i\Im s = (\gamma+R\cos \theta)+ iR\sin \theta$. Using the reflection formula $\Gamma(-s)\Gamma(1+s) = -\frac{\pi}{\sin \pi s}$ gives 
\[
    f(s) = -\frac{\pi}{\sin \pi s}\frac{\Gamma(s(2-p)-1)}{\Gamma(1+s)\,\Gamma(s(1-p))}(-\varepsilon)^{s}.
\]
For simplicity, let $G(s):=\frac{\Gamma(s(2-p)-1)}{\Gamma(1+s)\,\Gamma(s(1-p))}$. By Lemma~\ref{lem:uniform-H} there exists a real constant $C > 0$ such that
\[
    \left|G(s)\right|\le C R^{-\frac{3}{2}}e^{A\Re s}. 
\]
Moreover,
\[
|(-\varepsilon)^s| = \big|e^{s(\log\varepsilon+i\pi)}\big|
=e^{\Re s\log \varepsilon-\pi \Im s}.
\]

We now combine these estimates with Lemma~\ref{lem:sin-strip}. Set $A:=(2-p)\log(2-p)-(1-p)\log(1-p)=-\log u_{\star}$ and $\widetilde A:=A+\log\varepsilon=\log(\varepsilon/u_\star)<0$.

\paragraph{Case (a): $|\theta| \le R^{-1}$. } In this case, $|\Im s|\le1$, and hence
\begin{align*}
    \left| \frac{\pi}{\sin \pi s}\cdot G(s)\cdot (-\varepsilon)^{s}\right|
    &\le (2\pi)\cdot \left(C R^{-\frac{3}{2}} e^{A\Re s} \right) \cdot \left( e^{\Re s\log \eps -\pi \Im s}\right) \\
    &\le (2\pi C e^{\pi+\widetilde A\gamma}) R^{-\frac{3}{2}}
\end{align*}
where $\Re s=\gamma+R\cos\theta\ge\gamma$ and $e^{-\pi\Im s}\le e^\pi$ were used.

\paragraph{Case (b): $R^{-1}\le \theta \le \frac{\pi}{2}$.} 
\begin{align*}
    \left| \frac{\pi}{\sin \pi s}\cdot G(s)\cdot (-\varepsilon)^{s}\right| &\le \left(\frac{2\pi}{1 - e^{-4}}\ e^{-\pi |\Im s|}\right)\cdot (C R^{-\frac{3}{2}} e^{A\Re s}) \cdot (e^{\Re s\log \eps - \pi \Im s}) \\
    &\le \left(\frac{2\pi C e^{\widetilde A\gamma -4}}{1 - e^{-4}}\right) R^{-\frac{3}{2}}
\end{align*}
where we apply $\Re s = \gamma + R\cos \theta \ge \gamma$ and $\Im s = R\sin \theta \ge R(\frac{2}{\pi}\theta) \ge \frac{2}{\pi}$. 

\paragraph{Case (c): $-\frac{\pi}{2}\le \theta \le -R^{-1}$.} 
\begin{align*}
    \left| \frac{\pi}{\sin \pi s}\cdot G(s)\cdot (-\varepsilon)^{s}\right| &\le \left(\frac{2\pi C}{1 - e^{-4}}\right)\cdot (e^{-\pi |\Im s| -\pi \Im s}) \cdot R^{-\frac{3}{2}}\cdot e^{\widetilde A\Re s} \\
    &\le \left( \frac{2\pi C  e^{\widetilde A\gamma}}{1 - e^{-4}}\right) R^{-\frac{3}{2}}.
\end{align*}
Here we use $\Re s = \gamma + R\cos \theta \ge \gamma$.

Since $\widetilde A<0$, there is a constant $C>0$ such that, for every $|\theta|\le\frac{\pi}{2}$,
\[
    |f(s)| \le \left|\frac{\pi}{\sin \pi s}\right| \cdot |G(s)| \cdot |(-\varepsilon)^{s}| \le C R^{-\frac{3}{2}}.
\]

Thus we have 
\begin{align*}
    \left|\int_{\Sigma_{R}}f(s)ds \right| \le \int_{\Sigma_{R}}|f(s)| |ds| \le C R^{-\frac{3}{2}} (\pi R) = C R^{-\frac{1}{2}} \longrightarrow 0, \qquad (R \to \infty)
\end{align*}

Letting $R\to\infty$ along the chosen sequence in \eqref{eq:res-sum} gives
\[
\frac{1}{2\pi i}\int_{\gamma-i\infty}^{\gamma+i\infty} f(s)\,ds
=\sum_{n=1}^{\infty} \frac{\varepsilon^{\,n}}{n!}\,
\frac{\Gamma\big(n(2-p)-1\big)}{\Gamma\big(n(1-p)\big)}.
\]
Subtracting this identity from \(1\) gives \eqref{eq:MB}.
\end{proof}

\subsection{A Residue-Corrected Error-Controlled Evaluator}

We combine a finite residue sum with a truncated vertical MB segment of height
$2T$. The resulting representation can reduce finite-order truncation error
in some regimes, although its numerical benefit depends on \(p\) and the
contour parameters. We first bound the analytic truncation in exact arithmetic
and then add the quadrature error incurred by an implementation.

For \(0<p<1\) and \(0<\varepsilon<u_\star\), define
\[
 a_n:=\frac{1}{n!}\,\frac{\Gamma\big((2-p)n-1\big)}{\Gamma\big((1-p)n\big)},\qquad
 H(s):=-\frac{\Gamma(-s)\,\Gamma\big(s(2-p)-1\big)}
 {\Gamma\big(s(1-p)\big)}.
\]
Then for any $\gamma\in\big(\tfrac{1}{2-p},1\big)$, by Theorem~\ref{thm:MB}
\begin{equation}\label{eq:MB-either}
 z(\varepsilon)=1-\sum_{n=1}^{\infty} a_n\,\varepsilon^n
 \;=\; 1+\frac{1}{2\pi i}\int_{\gamma-i\infty}^{\gamma+i\infty} H(s)\,(-\varepsilon)^s\,ds,
\end{equation}
where \((-\varepsilon)^s=\exp(s(\log\varepsilon+i\pi))\) is the upper
boundary value.

Fix an integer $N\ge 1$ and choose a shifted vertical line $\Re s=\sigma_N$ with $\sigma_N\in(N,N+1)$. Shifting the contour in \eqref{eq:MB-either} to $\Re s=\sigma_N$ and summing the residues at $s=1,\dots,N$ yields the \emph{exact} identity
\begin{equation}\label{eq:hybrid-exact}
 z(\varepsilon)
 = 1 - \sum_{n=1}^{N} a_n\,\varepsilon^n
 + \frac{1}{2\pi i}\int_{\sigma_N-i\infty}^{\sigma_N+i\infty} H(s)\,(-\varepsilon)^s\,ds.
\end{equation}
Define the truncated map by cutting the vertical segment at height $T>0$:
\begin{equation}\label{eq:trunc-map-shifted}
 \widetilde z^{\,\sigma_N}_{N,T}(\varepsilon)
 := 1 - \sum_{n=1}^{N} a_n\,\varepsilon^n
 + \frac{1}{2\pi i}\int_{\sigma_N-iT}^{\sigma_N+iT} H(s)\,(-\varepsilon)^s\,ds.
\end{equation}
Subtracting \eqref{eq:trunc-map-shifted} from \eqref{eq:hybrid-exact} gives the clean error split
\begin{equation}\label{eq:error-split}
 z(\varepsilon)-\widetilde z^{\,\sigma_N}_{N,T}(\varepsilon)
 = \underbrace{\frac{1}{2\pi i}\!\left(\int_{\sigma_N+iT}^{\sigma_N+i\infty}+\int_{\sigma_N-i\infty}^{\sigma_N-iT}\right)\! H(s)\,(-\varepsilon)^s\,ds}_{E_{\mathrm{MB}}(T,\varepsilon)}.
\end{equation}

\begin{lemma}[Series-tail bound]\label{lem:series}
Set \(E_{\mathrm{series}}(N,\varepsilon)
:=-\sum_{n>N}a_n\varepsilon^n\). There exists \(C_2>0\), depending only on
\(p\), such that for every \(0<r<u_\star\),
\begin{equation}\label{eq:series-tail}
\sup_{|\varepsilon|\le r} \big|E_{\mathrm{series}}(N,\varepsilon)\big|
\;\le\; C_2\,\frac{(r/u_\star)^{N+1}}{1-r/u_\star}\xrightarrow[N\to\infty]{} 0.
\end{equation}
\end{lemma}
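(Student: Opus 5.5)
The plan is to bound the coefficients $a_n$ directly and then sum a geometric tail. The key estimate is a coefficient bound of the form $|a_n|\le C_2' u_\star^{-n}$ for all $n\ge1$, with $C_2'$ depending only on $p$. With this in hand, for $|\varepsilon|\le r<u_\star$ we get
\[
|E_{\mathrm{series}}(N,\varepsilon)|\le\sum_{n>N}|a_n|r^n\le C_2'\sum_{n>N}(r/u_\star)^n=C_2'\frac{(r/u_\star)^{N+1}}{1-r/u_\star}.
\]
So $C_2=C_2'$ works, and the limit as $N\to\infty$ is immediate because $r/u_\star<1$.

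To obtain the coefficient bound I will use Stirling's formula. This is the same asymptotic that appears in the proof of Lemma~\ref{lem:uniform-H}, now applied at real integer points. Note that $a_n=G(n)$, since
\[
n!=\Gamma(1+n),\qquad G(s)=\frac{\Gamma\bigl((2-p)s-1\bigr)}{\Gamma(1+s)\,\Gamma\bigl((1-p)s\bigr)}.
\]
Lemma~\ref{lem:uniform-H} gives $G(s)=C_pe^{As}s^{-3/2}(1+O(|s|^{-1}))$ with $A=-\log u_\star$, which was identified in the proof of Theorem~\ref{thm:MB}. Specializing to $s=n$ real yields $|a_n|\le 2|C_p|\,n^{-3/2}u_\star^{-n}$ for $n\ge n_0(p)$.

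The finitely many indices $1\le n<n_0$ are handled by enlarging the constant. Each $a_n u_\star^n$ is a finite real number: all Gamma arguments are positive, since $(2-p)n-1\ge 1-p>0$. So $C_2':=\max\{2|C_p|,\max_{n<n_0}|a_n|u_\star^n\}$ suffices, and it depends only on $p$.

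If one prefers to avoid relying on the uniformity claim of Lemma~\ref{lem:uniform-H}, the only real step needing care is the Stirling computation itself. Write $\log\Gamma(x)=(x-\tfrac12)\log x-x+\tfrac12\log2\pi+O(1/x)$ for the three factors $\Gamma((2-p)n-1)$, $\Gamma(n+1)$ and $\Gamma((1-p)n)$. The linear-in-$n$ terms then combine to $n[(2-p)\log(2-p)-(1-p)\log(1-p)]=nA$. The $\log n$ terms give exponent $((2-p)n-\tfrac32)-(n+\tfrac12)-((1-p)n-\tfrac12)=-\tfrac32$.

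This confirms $a_n\sim C_p n^{-3/2}u_\star^{-n}$, which is consistent with the radius $u_\star$ in Lemma~\ref{lem:f-inverse} and with the square-root branch point there. That branch point in fact predicts the $n^{-3/2}$ decay. The $n^{-3/2}$ factor is then simply discarded, and the bound follows.
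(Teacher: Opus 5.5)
Your proposal is correct and follows the paper's proof: Stirling's formula gives $a_n=O(n^{-3/2}u_\star^{-n})$, the finitely many small indices are absorbed into the constant so that $|a_n|\le C_2u_\star^{-n}$ for all $n\ge1$, and summing the geometric tail gives the bound. Your explicit check that the $\log n$ exponents combine to $-\tfrac32$ and the linear terms to $nA=-n\log u_\star$ fills in the Stirling computation that the paper leaves implicit.
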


\begin{proof}
Stirling's formula, applied to the positive real Gamma-function arguments, gives $a_n=O\big(n^{-3/2}u_\star^{-n}\big)$. Hence $|a_n|\le C_2u_\star^{-n}$ for some $C_2>0$ and every $n\ge1$, after enlarging $C_2$ to cover finitely many small indices. Thus, for $|\varepsilon|\le r$,
\(
\sum_{n>N}|a_n|\,|\varepsilon|^n
\le C_2 \sum_{n>N}(r/u_\star)^n
= C_2\,\frac{(r/u_\star)^{N+1}}{1-r/u_\star}.
\)
\end{proof}

\begin{lemma}[Vertical-tail (MB) bound]\label{lem:MBtail}
Fix $\sigma_{N}\in\big(N,\,N+1\big)$ with $N \ge 1$. For any $0<r<u_{\star}$ there exists a constant
$C' = C'(p,\sigma_{N},r)$ such that for all $T>0$,
\[
\sup_{0<\varepsilon\le r}\, |E_{\mathrm{MB}}(T,\varepsilon)|
\;\le\; 2\,C'\,T^{-1/2}\;\xrightarrow[T\to\infty]{}\;0.
\]
\end{lemma}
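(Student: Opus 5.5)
The plan is to bound the integrand of $E_{\mathrm{MB}}(T,\varepsilon)$ pointwise on the vertical line $s=\sigma_N+it$, $|t|\ge T$, and then integrate. As in the proof of Theorem~\ref{thm:MB}, the reflection formula $\Gamma(-s)\Gamma(1+s)=-\pi/\sin(\pi s)$ gives
\[
H(s)(-\varepsilon)^s=\frac{\pi}{\sin(\pi s)}\,G(s)\,(-\varepsilon)^s,
\qquad
|(-\varepsilon)^s|=\varepsilon^{\sigma_N}e^{-\pi t},
\]
where $G$ is the function of Lemma~\ref{lem:uniform-H}. Since $\sigma_N$ is not an integer, $\sin(\pi s)$ never vanishes on the line. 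From $|\sin(\pi s)|^2=\sin^2(\pi\sigma_N)+\sinh^2(\pi t)$ we get $|\sin(\pi s)|\ge \max\{|\sin(\pi\sigma_N)|,\sinh(\pi|t|)\}$. Hence $\pi/|\sin\pi s|\le c_1(\sigma_N)\,e^{-\pi|t|}$ for all real $t$, with $c_1$ depending only on $\sigma_N$. This plays the role that Lemma~\ref{lem:sin-strip} played on the semicircle.

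Next I bound $G$ on the vertical line. Use the Stirling expansion $G(s)=C_pe^{As}s^{-3/2}(1+O(|s|^{-1}))$ from the proof of Lemma~\ref{lem:uniform-H}. It is uniform in the closed right half-plane away from the poles, and the line $\Re s=\sigma_N\ge1>1/(2-p)$ avoids the pole of $\Gamma((2-p)s-1)$. On the line $|e^{As}|=e^{A\sigma_N}$ and $|s|^{-3/2}\le |t|^{-3/2}$. Near $t=0$ the function $G$ is continuous, hence bounded. So $|G(\sigma_N+it)|\le c_2(p,\sigma_N)(1+|t|)^{-3/2}$. Combining the three factors gives
\[
|H(s)(-\varepsilon)^s|\le c_1c_2\,\varepsilon^{\sigma_N}\,e^{-\pi|t|-\pi t}\,(1+|t|)^{-3/2}.
\]
For $t>0$ the exponential is $e^{-2\pi t}$. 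For $t<0$ it equals $1$, so that half of the tail has no exponential decay.

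This is the main point: the lower tail $\int_{-\infty}^{-T}$ must be controlled by the algebraic factor alone. Since $(1+|t|)^{-3/2}$ is integrable, $\int_T^\infty(1+t)^{-3/2}dt=2(1+T)^{-1/2}\le 2T^{-1/2}$. The upper tail is bounded by the same quantity, since $e^{-2\pi t}\le1$. Also $\varepsilon^{\sigma_N}\le r^{\sigma_N}$ uniformly for $0<\varepsilon\le r$; actually $r<u_\star<1$ makes this at most $1$, but keeping $r^{\sigma_N}$ explains why $C'$ depends on $r$. Setting $C':=c_1c_2r^{\sigma_N}/(2\pi)$ and adding the two half-line integrals gives $|E_{\mathrm{MB}}|\le 2C'T^{-1/2}$.

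The step I expect to need the most care is making the Stirling bound for $G$ genuinely uniform along the entire vertical line, including moderate $|t|$. My first attempt is to use the uniform Stirling bound $\log\Gamma(w)=(w-\tfrac12)\log w-w+\tfrac12\log 2\pi+O(1/|w|)$ valid in $|\arg w|\le\pi-\delta$. All three arguments $(2-p)s-1$, $1+s$ and $(1-p)s$ lie in the right half-plane for $\Re s=\sigma_N$, so the expansion applies. On compact $|t|\le t_0$ I would fall back on continuity. One must also check that the exponent of $|t|$ is exactly $-3/2$, with no extra $e^{c|t|}$ factor. The $\arg$ terms in $(w-\tfrac12)\log w$ produce $e^{-\frac{\pi}{2}|t|\left((2-p)-1-(1-p)\right)}=e^{0}$, so they cancel. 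This cancellation is what makes the decay purely algebraic and yields the stated $T^{-1/2}$ rate.
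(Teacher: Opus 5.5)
Your proposal is correct and follows essentially the paper's argument. Both proofs obtain the pointwise bound $|H(\sigma_N+it)|\le C(1+|t|)^{-3/2}e^{-\pi|t|}$ on the shifted line: you get it from the reflection formula together with the Stirling form of $G$, while the paper applies the vertical-line Gamma estimate directly to the three factors of $H$. Both then note that $|(-\varepsilon)^{s}|=\varepsilon^{\sigma_N}e^{-\pi t}$ cancels the exponential on the lower half-line, and integrate the remaining algebraic tail to get $T^{-1/2}$.

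One harmless bookkeeping slip: the two half-lines together give $\frac{1}{2\pi}c_1c_2r^{\sigma_N}\cdot 4T^{-1/2}$, so you need $C'=c_1c_2r^{\sigma_N}/\pi$ rather than $c_1c_2r^{\sigma_N}/(2\pi)$. This does not affect the lemma, which only asserts that some $C'$ exists.
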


\begin{proof}
The standard vertical-line Gamma estimate \cite[Eq.~5.11.9]{NISTHandbook2010}
\[
|\Gamma(z)|\le C\sqrt{2\pi}\,|\Im z|^{\Re z-1/2}
e^{-\pi|\Im z|/2},\qquad |\Im z|\to\infty,
\]
gives, after enlarging the constant to cover bounded nonzero $t$, some $C=C(p,\sigma_N)>0$ such that
\[
|H(\sigma_{N}+it)| \;\le\; C\,|t|^{-3/2}\,e^{A\sigma_N}\,e^{-\pi|t|},
\qquad
A := -\log u_{\star}.
\]
Using the prescribed upper boundary value, we have
\[
|(-\varepsilon)^{\sigma_N+it}|
=\varepsilon^{\sigma_N}e^{-\pi t}
\le\varepsilon^{\sigma_N}e^{\pi|t|}.
\]
Hence, for \(0<\varepsilon\le r\),
\[
\begin{aligned}
|E_{\mathrm{MB}}(T,\varepsilon)|
&= \frac{1}{2\pi}\Big|\int_{|t|>T} H(\sigma_{N}+it)\,(-\varepsilon)^{\sigma_{N}+it}\,dt\Big| \\
&\le \frac{C e^{A\sigma_{N}} r^{\sigma_N}}{2\pi}
 \int_{|t|>T} |t|^{-3/2}\,dt\\
&= \frac{2C e^{A\sigma_{N}}r^{\sigma_{N}}}{\pi}T^{-1/2}
=2C'T^{-1/2},
\end{aligned}
\]
with $C' := \frac{C e^{A\sigma_{N}} r^{\sigma_{N}}}{\pi}$, which depends only on $(p,\sigma_{N},r)$.
\end{proof}

The main result of this subsection is as follows. 

\begin{theorem}[Two-regime analytic truncation]\label{thm:theorem54}
Fix \(0<p<1\), \(\lambda>0\), a target accuracy \(\eta>0\), a cap
\(R\in(0,u_\star)\), and any \(\rho\in(0,R)\). Then there exist an integer
\(N\ge1\), a shift \(\sigma_N\in(N,N+1)\), and a height \(T>0\) such that
the piecewise map
\begin{equation}\label{eq:z-hat}
\hat z(\varepsilon):=\begin{cases}
\displaystyle 1-\sum_{n=1}^{N} a_n\,\varepsilon^n, & 0\le\varepsilon\le \rho,\\[0.6em]
\displaystyle 1-\sum_{n=1}^{N} a_n\,\varepsilon^n + \dfrac{1}{2\pi i}\int_{\sigma_N-iT}^{\sigma_N+iT} H(s)\,(-\varepsilon)^s\,ds, & \rho<\varepsilon\le R,
\end{cases}
\end{equation}
satisfies \(\sup_{0\le\varepsilon\le R}
|z(\varepsilon)-\hat z(\varepsilon)|\le\eta\).
Consequently, for \(\varepsilon(y)=\lambda p y^{p-2}\) and
\(y\ge\tau_{p,\lambda}\) with \(\varepsilon(y)\le R\), the approximation
\(\hat x_+(y):=y\hat z(\varepsilon(y))\) obeys
\(|\hat x_+(y)-x_+(y)|\le y\eta\).
In particular, choosing \(q_pu_\star\le R<u_\star\) covers the full active
proximal domain.
\end{theorem}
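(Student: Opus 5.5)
The plan is to control the two regimes separately and then pick the parameters in a fixed order: first \(N\), then \(\sigma_N\), and last \(T\). Everything rests on the error split \eqref{eq:error-split} together with Lemmas~\ref{lem:series} and~\ref{lem:MBtail}.

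\emph{Regime \(0\le\varepsilon\le\rho\).} Here \(\hat z\) is the plain partial sum, so \(z-\hat z=E_{\mathrm{series}}(N,\varepsilon)\). Since \(\rho<R<u_\star\), Lemma~\ref{lem:series} with \(r=\rho\) gives
\[
\sup_{0\le\varepsilon\le\rho}|z(\varepsilon)-\hat z(\varepsilon)|\le C_2\frac{(\rho/u_\star)^{N+1}}{1-\rho/u_\star}.
\]
We choose \(N\ge1\) large enough that this bound is at most \(\eta\). The point \(\varepsilon=0\) is trivial, because both sides equal \(1\).

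\emph{Regime \(\rho<\varepsilon\le R\).} With this \(N\) fixed, we fix any \(\sigma_N\in(N,N+1)\), for instance \(\sigma_N=N+\tfrac12\). The identity \eqref{eq:hybrid-exact} is exact for \(0<\varepsilon<u_\star\). It follows from Theorem~\ref{thm:MB} by shifting the line from \(\gamma\) to \(\sigma_N\) across the simple poles \(s=1,\dots,N\) of \(\Gamma(-s)\). The horizontal connecting segments vanish as the height tends to infinity, by the vertical-line Gamma estimate used in Lemma~\ref{lem:MBtail}, which holds uniformly for \(\Re s\) in the compact strip \([\gamma,\sigma_N]\).

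Given that identity, \(z-\hat z=E_{\mathrm{MB}}(T,\varepsilon)\) by \eqref{eq:error-split}. Lemma~\ref{lem:MBtail} with \(r=R\) then gives
\[
\sup_{0<\varepsilon\le R}|E_{\mathrm{MB}}|\le 2C'(p,\sigma_N,R)\,T^{-1/2}.
\]
We take \(T\ge(2C'/\eta)^2\). Combining the two regimes gives \(\sup_{[0,R]}|z-\hat z|\le\eta\).

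\emph{Consequences for \(\hat x_+\).} For \(y\ge\tau_{p,\lambda}\) with \(\varepsilon(y)\le R\), Theorem~\ref{thm:series-larger-root} gives \(x_+(y)=S(y)=y\,z(\varepsilon(y))\). Hence
\[
|\hat x_+(y)-x_+(y)|=y\,|\hat z(\varepsilon(y))-z(\varepsilon(y))|\le y\eta.
\]
The proof of Theorem~\ref{thm:series-larger-root} also shows \(\varepsilon(y)\le\varepsilon(\tau_{p,\lambda})=q_pu_\star\) on the whole active domain. So any \(R\in[q_pu_\star,u_\star)\) makes the constraint \(\varepsilon(y)\le R\) automatic.

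The main obstacle I expect is justifying \eqref{eq:hybrid-exact} itself, namely the contour shift. The decay of \(H(s)(-\varepsilon)^s\) on horizontal segments is delicate because the upper boundary value contributes a factor \(e^{-\pi t}\), which grows as \(t\to-\infty\). This growth has to be balanced against the factor \(e^{-\pi|t|}\) coming from \(H\).

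The balance works because the combined factor \(e^{-\pi|t|}e^{-\pi t}\) is bounded. That leaves only the power decay \(|t|^{(2-p)\sigma-3/2-(1-p)\sigma-\sigma-1/2}\). Here the exponent simplifies to \(-3/2\) uniformly in \(\sigma\), and I would check this exponent bookkeeping carefully via Stirling's formula. The resulting \(|t|^{-3/2}\) factor, times a segment of bounded length, tends to zero, which is enough.

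A secondary point is that \(C'\) depends on \(\sigma_N\), and hence on \(N\). This is harmless because \(T\) is chosen after \(N\) and \(\sigma_N\).
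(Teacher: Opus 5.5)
Your proposal is correct and follows the paper's proof essentially step for step. You fix $N$ from Lemma~\ref{lem:series} with $r=\rho$, take any $\sigma_N\in(N,N+1)$, choose $T$ from Lemma~\ref{lem:MBtail} with $r=R$, and transfer the bound to $x_+$ by multiplying by $y$ and using $\varepsilon(\tau_{p,\lambda})=q_pu_\star$. Your added justification of the contour shift behind \eqref{eq:hybrid-exact}, which the paper only asserts, is sound. One small slip: the factor $1/\Gamma((1-p)s)$ contributes $-(1-p)\sigma+\tfrac12$, so the exponent should read $-\sigma-\tfrac12+(2-p)\sigma-\tfrac32-(1-p)\sigma+\tfrac12=-\tfrac32$; as you wrote it, it sums to $-2$. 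Either value still makes the horizontal segments vanish, so the argument is unaffected.
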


\begin{proof}
\textit{(i) Small-\(\varepsilon\) branch.} For \(0\le\varepsilon\le\rho\),
\(\hat z(\varepsilon)=1-\sum_{n=1}^{N}a_n\varepsilon^n\). Let \(N\) be
large enough that
\begin{equation} \label{eq:chooseN}
    |z(\varepsilon) - \hat{z}(\varepsilon)| \le \sum_{n > N}|a_{n}|\cdot |\eps|^{n} \le C_{2} \frac{(\rho/u_\star)^{N+1}}{1 - \rho/u_\star} \le \eta
\end{equation}
which is possible by Lemma~\ref{lem:series}. 

\smallskip
\textit{(ii) MB-augmented branch.} For any \(\rho<\varepsilon\le R\),
\begin{align*}
    \hat{z}(\eps) = 1-\sum_{n=1}^{N} a_n\,\varepsilon^n + \dfrac{1}{2\pi i}\int_{\sigma_N-iT}^{\sigma_N+iT} H(s)\,(-\varepsilon)^s\,ds.
\end{align*}

Use the exact split~\eqref{eq:error-split} on $\Re s=\sigma_N\in(N,N+1)$.
Choose $T>0$ so that $2C' T^{-1/2}\le \eta$ via Lemma~\ref{lem:MBtail} (with $\sigma=\sigma_N$ and $r=R$).
Then for any \(\rho<\varepsilon\le R\),
\(
|z(\varepsilon)-\hat z(\varepsilon)|
\le 2C' T^{-1/2}\le \eta.
\)

\smallskip
\textit{(iii) Transfer to $x_+(y)$.} Set $\varepsilon(y)=\lambda p y^{p-2}$ and $\hat x_+(y):=y\,\hat z(\varepsilon(y))$.
Then for any $y$ with $|\varepsilon(y)|\le R$,
\(
|\hat x_+(y)-x_+(y)|
= y\,|\hat z(\varepsilon(y))-z(\varepsilon(y))|\le y\,\eta.
\)
\end{proof}

For numerical use, let $I_{N,T}(\varepsilon)$ denote the finite integral in the second branch of \eqref{eq:z-hat}. If a validated quadrature returns $\widehat I_{N,T}(\varepsilon)$ and a bound $E_{\rm quad}(\varepsilon)$ satisfying
\[
|I_{N,T}(\varepsilon)-\widehat I_{N,T}(\varepsilon)|\le E_{\rm quad}(\varepsilon),
\]
then replacing $I_{N,T}$ by $\widehat I_{N,T}$ changes the second-branch bound to
\begin{equation}\label{eq:total-numerical-error}
|z(\varepsilon)-\widehat z_{\rm num}(\varepsilon)|
\le 2C'T^{-1/2}+E_{\rm quad}(\varepsilon).
\end{equation}
The constants in Lemmas~\ref{lem:series} and~\ref{lem:MBtail} arise from
asymptotic Gamma bounds and are not directly available to an implementation.
They prove convergence, but a machine-verifiable use of
\eqref{eq:total-numerical-error} would require explicit numerical majorants
and validated quadrature.

An implementable a posteriori control follows instead from the conditioning
of the active branch. Define the normalized stationary residual
\[
 \mathcal R_\varepsilon(w):=w-1+\varepsilon w^{p-1}.
\]
If \(w\in[\rho_{p,\lambda}/y,1]\), then
Proposition~\ref{prop:active-conditioning} below and the mean-value theorem give
\begin{equation}\label{eq:normalized-residual-bound}
 |w-z(\varepsilon)|
 \le \frac{|\mathcal R_\varepsilon(w)|}{1-p/2},
 \qquad
 |yw-x_+(y)|
 \le \frac{y|\mathcal R_\varepsilon(w)|}{1-p/2}.
\end{equation}
Thus the right-hand side supplies a computable exact-arithmetic stopping test
for either the series or the MB candidate. A formally certified
floating-point result additionally requires directed rounding (or interval
arithmetic) for the interval-membership and residual evaluations.

For the truncated series, the coefficients are precomputed and Horner's rule
evaluates \(1-\sum_{n=1}^N a_n\varepsilon^n\) in \(O(N)\) operations and
\(O(1)\) auxiliary storage. Algorithm~\ref{alg:hybrid-two} combines this
evaluation with the residual test.

\begin{algorithm}[Residual-controlled positive-branch evaluation]
\label{alg:hybrid-two}
Given \(p\in(0,1)\), \(\lambda>0\), \(y\ge\tau_{p,\lambda}\), a normalized
tolerance \(\eta>0\), a switch \(0<\rho<R<u_\star\), and
\(\varepsilon(y)\le R\), proceed as follows.
\begin{enumerate}
\item Set \(\varepsilon=\lambda p y^{p-2}\), choose an initial order \(N\),
and precompute or update the coefficients \(a_n\). For the MB branch also
choose \(\sigma_N\in(N,N+1)\), \(T>0\), and a quadrature rule.
\item If \(\varepsilon\le\rho\), compute the series candidate
\(\widehat z=1-\sum_{n=1}^Na_n\varepsilon^n\). Otherwise compute the
residue-corrected candidate from \eqref{eq:trunc-map-shifted}.
\item If \(|\operatorname{Im}\widehat z|\) exceeds the prescribed arithmetic
tolerance, reject the candidate. Otherwise set \(w=\operatorname{Re}\widehat z\),
check that \(w\in[\rho_{p,\lambda}/y,1]\), and define
\[
 \mathcal E(y):=
 \frac{y|\mathcal R_\varepsilon(w)|}{1-p/2}.
\]
If \(\mathcal E(y)\le y\eta\), return
\(\widehat x_+(y)=yw\). Otherwise increase \(N\), or on the MB
branch increase \(T\) and refine the quadrature, and repeat.
\item If the admissibility test repeatedly fails, use monotone bisection on
\([\rho_{p,\lambda},y]\), which is always available on the active branch.
\end{enumerate}
In exact arithmetic, \eqref{eq:normalized-residual-bound} makes
\(\mathcal E(y)\) a rigorous a posteriori bound. Interval evaluation turns
the same test into a floating-point certificate.
\end{algorithm}

\subsection{Conditioning on the Active Proximal Branch}
\label{subsec:conditioning}

The activation threshold of the proximity operator must be distinguished from
the saddle--node value of the stationary equation. This distinction also
clarifies the numerical conditioning of the root that is relevant to the
proximal map.

\begin{proposition}[Uniform conditioning on the active branch]
\label{prop:active-conditioning}
Let
\[
 \rho_{p,\lambda}=\bigl(2\lambda(1-p)\bigr)^{1/(2-p)},
 \qquad
 \tau_{p,\lambda}=\frac{2-p}{2(1-p)}\rho_{p,\lambda}.
\]
For every \(y\ge\tau_{p,\lambda}\), the larger stationary root satisfies
\(x_+(y)\ge\rho_{p,\lambda}\), and
\[
 1-\frac p2
 \le \partial_x\Phi_{p,\lambda,y}\bigl(x_+(y)\bigr)<1,
 \qquad
 1<\frac{d x_+(y)}{dy}\le\frac{1}{1-p/2}.
\]
Moreover,
\[
 0<\varepsilon(y)\le q_pu_\star,
 \qquad q_p:=p\,2^{1-p}<1.
\]
Thus, for fixed \(p\), both the inverse sensitivity and the series parameter
are uniformly bounded throughout the active proximal domain.
\end{proposition}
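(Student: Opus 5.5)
We work with \(\Phi(x):=\Phi_{p,\lambda,y}(x)=x-y+\lambda p x^{p-1}\) on \((0,\infty)\). Its derivative \(\Phi'(x)=1-\lambda p(1-p)x^{p-2}\) is strictly increasing in \(x\), and it is \(<1\) for every \(x>0\), which gives the upper bound \(\partial_x\Phi(x_+)<1\). The plan is to show first that \(x_+(y)\ge\rho_{p,\lambda}\); then the lower bound follows by monotonicity of \(\Phi'\). Indeed,
\[
\Phi'(\rho_{p,\lambda})=1-\lambda p(1-p)\rho_{p,\lambda}^{p-2}=1-\frac{\lambda p(1-p)}{2\lambda(1-p)}=1-\frac p2 ,
\]
using \(\rho_{p,\lambda}^{2-p}=2\lambda(1-p)\).

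\textbf{Step 1: \(x_+(y)\ge\rho_{p,\lambda}\).} At \(y=\tau_{p,\lambda}\), the description \eqref{eq:prox_p} says \(\rho_{p,\lambda}\) is a nonzero minimizer, hence a stationary point. Alternatively, one checks directly that \(\Phi_{p,\lambda,\tau}(\rho)=\rho-\tau+\tfrac p2\rho=\rho\bigl(1+\tfrac p2-\tfrac{2-p}{2(1-p)}\bigr)\). This is not zero in general, so we refine: \(\lambda p\rho^{p-1}=\tfrac{p}{2(1-p)}\rho\), and \(1+\tfrac{p}{2(1-p)}=\tfrac{2-p}{2(1-p)}\), so indeed \(\Phi_{p,\lambda,\tau}(\rho)=0\). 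Since \(\Phi'(\rho)=1-p/2>0\), \(\rho\) lies to the right of the unique minimizer \(x_{\min}\), so \(\rho=x_+(\tau)\). For \(y>\tau\), \(\Phi_{p,\lambda,y}(\rho)=\tau-y<0\), while \(\Phi\) is increasing on \([x_{\min},\infty)\). Hence the larger root satisfies \(x_+(y)>\rho\). This also shows that \(x_+\) is increasing in \(y\).

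\textbf{Step 2: derivative bounds.} By the implicit function theorem, since \(\partial_x\Phi(x_+)>0\) and \(\partial_y\Phi=-1\), we get \(dx_+/dy=1/\partial_x\Phi(x_+(y))\). The two-sided bound \(1-p/2\le\partial_x\Phi<1\) then gives \(1<dx_+/dy\le 1/(1-p/2)\).

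\textbf{Step 3: the series parameter.} This bound was already established in the proof of Theorem~\ref{thm:series-larger-root}. Because \(\varepsilon(y)=\lambda p y^{p-2}\) decreases in \(y\), we have \(\varepsilon(y)\le\varepsilon(\tau)=q_pu_\star\). The identity at \(\tau\) can be verified by substituting \(\tau^{2-p}=\bigl(\tfrac{2-p}{2(1-p)}\bigr)^{2-p}2\lambda(1-p)\) and comparing with \eqref{eq:vstar-ustar}. The inequality \(q_p<1\) follows from monotonicity of \(\log p+(1-p)\log 2\), whose derivative \(1/p-\log 2\) is positive on \((0,1)\).

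The only delicate point is Step 1, namely identifying \(\rho\) as the larger rather than the smaller root at the threshold. The sign of \(\Phi'(\rho)\) settles this cleanly.
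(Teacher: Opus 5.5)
Your proof is correct and follows the same route as the paper. You establish $x_+(y)\ge\rho_{p,\lambda}$, combine the monotonicity of $\partial_x\Phi$ with $\partial_x\Phi(\rho_{p,\lambda})=1-\tfrac p2$, apply implicit differentiation, and use that $\varepsilon(y)$ decreases to $\varepsilon(\tau_{p,\lambda})=q_pu_\star$. Your Step 1, which checks $\Phi_{p,\lambda,\tau_{p,\lambda}}(\rho_{p,\lambda})=0$ and the sign of $\Phi$ directly, is more self-contained than the paper's version. The paper instead cites the prox characterization and simply asserts that $x_+$ increases. You should delete your first expression $\rho-\tau+\tfrac p2\rho$, since $\lambda p\rho^{p-1}=\tfrac{p}{2(1-p)}\rho$, as you then correctly note.
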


\begin{proof}
The saddle--node of the stationary equation occurs at
\[
 x_{\rm sn}=\bigl(\lambda p(1-p)\bigr)^{1/(2-p)},
 \qquad
 y_{\rm sn}=\frac{2-p}{1-p}x_{\rm sn}.
\]
It lies strictly below the activation threshold because
\[
 \frac{\tau_{p,\lambda}}{y_{\rm sn}}
 =\frac12\left(\frac2p\right)^{1/(2-p)}>1.
\]
The last inequality is equivalent to \(p\,2^{1-p}<1\); it also follows because
\(p\mapsto\log(p\,2^{1-p})\) is strictly increasing on \((0,1)\) and has limit
zero at \(p=1\).

At activation, the positive global minimizer is
\(x_+(\tau_{p,\lambda})=\rho_{p,\lambda}\). The larger root increases with
\(y\), so \(x_+(y)\ge\rho_{p,\lambda}\). Since \(p-2<0\),
\[
 \partial_x\Phi_{p,\lambda,y}\bigl(x_+(y)\bigr)
 =1-\lambda p(1-p)x_+(y)^{p-2}
 \ge 1-\lambda p(1-p)\rho_{p,\lambda}^{p-2}
 =1-\frac p2.
\]
Implicit differentiation gives the asserted bounds on \(d x_+/dy\).
Finally, \(\varepsilon(y)\) decreases with \(y\), and the calculation in
Theorem~\ref{thm:series-larger-root} gives
\(\varepsilon(\tau_{p,\lambda})=q_pu_\star\).
\end{proof}

In particular, \(\Phi_{p,\lambda,y}\) is strictly increasing on
\([\rho_{p,\lambda},y]\), with
\(\Phi_{p,\lambda,y}(\rho_{p,\lambda})\le0<\Phi_{p,\lambda,y}(y)\);
equality on the left occurs only at activation. For
\(y>\tau_{p,\lambda}\), the root can therefore be computed robustly by
bisection. The series may nevertheless converge more slowly when \(q_p\) is
close to one, especially as \(p\uparrow1\). The residue-corrected
representation can reduce finite-order error, but its contour cost grows in
the hardest cases; bisection remains a robust fallback. The difficulty is not
a singular active root.

\subsection{Certified Use in Proximal Gradient}
\label{subsec:pg-certified}

Let
\[
 F(x)=h(x)+g(x),\qquad g(x)=\lambda\|x\|_p^p,
\]
where \(h:\mathbb R^n\to\mathbb R\) has an \(L\)-Lipschitz gradient for
some \(L>0\), and let \(\partial\) denote the limiting subdifferential. For
\(\alpha\in(0,1/L)\), set
\[
 Q_\alpha(y;x):=h(x)+\langle\nabla h(x),y-x\rangle
 +\frac{1}{2\alpha}\|y-x\|^2+g(y).
\]
The following standard implication is recorded to specify the two
certificates required of the scalar evaluator.

\begin{theorem}[Certified inexact proximal gradient]
\label{thm:lp-inexact}
Assume that \(F\) is bounded below and that, for sequences
\(\delta_k,\eta_k\ge0\), the computed iterates satisfy
\begin{align}
 Q_\alpha(x_{k+1};x_k)&\le F(x_k)+\delta_k,
 \label{eq:model-certificate}\\
 r_{k+1}&\in \nabla h(x_k)+\frac1\alpha(x_{k+1}-x_k)
              +\partial g(x_{k+1}),
 \qquad \|r_{k+1}\|\le\eta_k.
 \label{eq:residual-certificate}
\end{align}
If \(\sum_{k=0}^{\infty}\delta_k<\infty\) and \(\eta_k\to0\), then, with
\(c_\alpha:=1/(2\alpha)-L/2>0\),
\begin{align*}
 F(x_{k+1})&\le F(x_k)-c_\alpha\|x_{k+1}-x_k\|^2+\delta_k,\\
 \sum_{k=0}^{\infty}\|x_{k+1}-x_k\|^2&<\infty,
 \qquad F(x_k)\ \text{converges},\\
 \operatorname{dist}\bigl(0,\partial F(x_{k+1})\bigr)
 &\le \eta_k+\left(L+\frac1\alpha\right)\|x_{k+1}-x_k\|
 \longrightarrow0.
\end{align*}
Consequently, every cluster point of \(\{x_k\}\) is a critical point of \(F\).
\end{theorem}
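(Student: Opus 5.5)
The proof uses the descent lemma for \(h\) to turn the model certificate \eqref{eq:model-certificate} into a sufficient-decrease inequality. From there, a telescoping argument gives summable step lengths, and a subdifferential sum rule combined with the residual certificate \eqref{eq:residual-certificate} gives the criticality estimate.

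\emph{Step 1 (descent).} Since \(\nabla h\) is \(L\)-Lipschitz,
\[
 h(x_{k+1})\le h(x_k)+\langle\nabla h(x_k),x_{k+1}-x_k\rangle+\tfrac L2\|x_{k+1}-x_k\|^2 .
\]
Adding \(g(x_{k+1})\) to both sides and comparing with the definition of \(Q_\alpha\) yields
\[
 F(x_{k+1})\le Q_\alpha(x_{k+1};x_k)-\bigl(\tfrac1{2\alpha}-\tfrac L2\bigr)\|x_{k+1}-x_k\|^2 .
\]
Inserting \eqref{eq:model-certificate} gives the first displayed inequality, with \(c_\alpha>0\) because \(\alpha<1/L\).

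\emph{Step 2 (convergence of values and steps).} Summing the descent inequality from \(k=0\) to \(K\) gives
\[
 c_\alpha\sum_{k\le K}\|x_{k+1}-x_k\|^2\le F(x_0)-\inf F+\sum_k\delta_k<\infty,
\]
so the steps are square summable. The sequence \(F(x_k)+\sum_{j\ge k}\delta_j\) is nonincreasing, because its increment is at most \(-c_\alpha\|x_{k+1}-x_k\|^2\le 0\). It is also bounded below, since \(F\) is bounded below. It therefore converges, and since the tail sums \(\sum_{j\ge k}\delta_j\) tend to zero, \(F(x_k)\) converges as well.

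\emph{Step 3 (subgradient estimate).} The function \(h\) is \(C^1\), so the sum rule for the limiting subdifferential gives \(\partial F(x_{k+1})=\nabla h(x_{k+1})+\partial g(x_{k+1})\). By \eqref{eq:residual-certificate} there is \(\xi\in\partial g(x_{k+1})\) with
\[
 r_{k+1}=\nabla h(x_k)+\tfrac1\alpha(x_{k+1}-x_k)+\xi .
\]
Then \(v:=r_{k+1}+\nabla h(x_{k+1})-\nabla h(x_k)-\tfrac1\alpha(x_{k+1}-x_k)\) belongs to \(\partial F(x_{k+1})\), and
\[
 \|v\|\le\eta_k+\Bigl(L+\tfrac1\alpha\Bigr)\|x_{k+1}-x_k\|\to0 .
\]

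\emph{Step 4 (cluster points).} Let \(x_{k_j}\to\bar x\). Since the steps tend to zero, \(x_{k_j+1}\to\bar x\) as well. The limiting subdifferential is closed under \(F\)-attentive convergence, so we must show \(F(x_{k_j+1})\to F(\bar x)\). Because \(h\) is continuous, this reduces to \(g(x_{k_j+1})\to g(\bar x)\). Continuity of \(g=\lambda\|\cdot\|_p^p\) provides exactly this.

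The main obstacle in Step 4 would be a merely lower semicontinuous \(g\). That case would require using \eqref{eq:model-certificate} to compare \(Q_\alpha(x_{k+1};x_k)\) with \(Q_\alpha(\bar x;x_k)\), which is not available here because \(x_{k+1}\) is only approximate. In our setting \(g\) is continuous on \(\mathbb R^n\), since \(|t|^p\) is continuous, so this issue does not arise. Given \(F(x_{k_j+1})\to F(\bar x)\), the vector \(v\) from Step 3 tends to zero, and the closedness of the limiting subdifferential graph yields \(0\in\partial F(\bar x)\).
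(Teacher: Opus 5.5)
Your proposal is correct and follows essentially the same route as the paper. The descent lemma turns the model certificate into sufficient decrease, and the nonincreasing sequence $F(x_k)+\sum_{j\ge k}\delta_j$ gives convergence of values and square-summable steps. The vector $r_{k+1}+\nabla h(x_{k+1})-\nabla h(x_k)-\alpha^{-1}(x_{k+1}-x_k)$ lies in $\partial F(x_{k+1})$, and continuity of $F$ together with closedness of the limiting subdifferential gives criticality of cluster points. You also state explicitly that continuity of $g$ supplies the $F$-attentive convergence needed for the closedness step, which the paper uses only implicitly.
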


\begin{proof}
The descent lemma gives
\(F(y)\le Q_\alpha(y;x)-c_\alpha\|y-x\|^2\), so
\eqref{eq:model-certificate} yields the first inequality. For
\(A_k:=F(x_k)+\sum_{j=k}^{\infty}\delta_j\), it follows that
\(A_{k+1}\le A_k-c_\alpha\|x_{k+1}-x_k\|^2\); hence \(A_k\) and
\(F(x_k)\) converge and the squared steps are summable. Choosing
\(u_{k+1}\in\partial g(x_{k+1})\) in
\eqref{eq:residual-certificate} gives
\(w_{k+1}:=\nabla h(x_{k+1})+u_{k+1}
=r_{k+1}+\nabla h(x_{k+1})-\nabla h(x_k)
-\alpha^{-1}(x_{k+1}-x_k)\in\partial F(x_{k+1})\), and hence the
stated distance bound. If
\(x_{k_j}\to\bar x\), then \(x_{k_j+1}\to\bar x\) and
\(w_{k_j+1}\to0\); continuity of \(F\) and closedness of \(\partial F\)
give \(0\in\partial F(\bar x)\).
\end{proof}

The paper-specific point is that both certificates can be checked from the
scalar proximal calculations. Define
\[
 v_k:=x_k-\alpha\nabla h(x_k),\qquad
 \Psi_k(z):=\frac1{2\alpha}\|z-v_k\|^2+g(z).
\]
Completing the square gives
\[
 Q_\alpha(y;x_k)-F(x_k)=\Psi_k(y)-\Psi_k(x_k),
\]
so \eqref{eq:model-certificate} is the directly computable, separable test
\(\Psi_k(x_{k+1})\le\Psi_k(x_k)+\delta_k\). For a nonzero coordinate
\[
 x_{k+1,i}=\operatorname{sgn}(v_{k,i})\,\widehat s_i
\]
the corresponding residual is
\[
 r_{k+1,i}
 =\frac{\operatorname{sgn}(v_{k,i})}{\alpha}
   \Phi_{p,\alpha\lambda,|v_{k,i}|}(\widehat s_i).
\]
At a zero coordinate, \(\partial(\lambda|\cdot|^p)(0)=\mathbb R\), so the
coordinate residual can always be chosen as zero. The residual test alone
therefore cannot validate a zero output; this is the role of the model-value
test.

Let \(\mathcal I_k:=\{i:x_{k+1,i}\ne0\}\), and suppose that each
\(i\in\mathcal I_k\) is evaluated on the active branch, with
\(|v_{k,i}|\ge\tau_{p,\alpha\lambda}\) and
\(\widehat s_i\in[\rho_{p,\alpha\lambda},|v_{k,i}|]\). Setting
\(s_i=x_+(|v_{k,i}|)\), Proposition~\ref{prop:active-conditioning} and the
mean-value theorem give
\[
 \frac{1-p/2}{\alpha}|\widehat s_i-s_i|
 \le |r_{k+1,i}|
 \le \frac1\alpha|\widehat s_i-s_i|.
\]
Thus, an evaluator guarantee
\(|\widehat s_i-s_i|\le\mathcal E_i\) implies
\(|r_{k+1,i}|\le\mathcal E_i/\alpha\). Since the zero-coordinate residuals
may be chosen as above,
\[
 \|r_{k+1}\|
 \le \frac1\alpha
 \left(\sum_{i\in\mathcal I_k}\mathcal E_i^2\right)^{1/2}.
\]
Given prescribed sequences with \(\sum_k\delta_k<\infty\) and
\(\eta_k\to0\), the evaluator can therefore be refined until the separable
model-value and residual tests meet these tolerances;
Theorem~\ref{thm:lp-inexact} then applies. This formulation
avoids transferring limiting subgradients from an exact proximal point to a
nearby approximation, which is not justified by a distance bound alone, and
is consistent with standard descent-and-relative-error analyses of nonconvex
proximal algorithms
\citep{AttouchBolteSvaiter2013KL,BolteSabachTeboulle2014,LiPong2016}.

\subsection{Numerical Validation}
\label{sec:num-hybrid}

We report two complementary tests: a scalar parameter sweep and an
inexact proximal-gradient experiment. Reference scalar roots are computed
independently by monotone bisection at 70 and 100 decimal digits. The reported
errors are empirical floating-point errors, not validated interval
certificates.

For the scalar study, write \(z=x_+(y)/y\) and
\(e_z=|\widehat z-z_{\rm ref}|\). We consider the 135 combinations
\[
\begin{aligned}
p&\in\{0.1,\tfrac13,\tfrac12,\tfrac23,0.8,0.9,0.95,0.99,0.999\},\\
\lambda&\in\{10^{-6},0.8,10^6\},\\
y/\tau_{p,\lambda}-1&\in\{10^{-10},10^{-6},10^{-2},0.5,9\}.
\end{aligned}
\]
Besides fixed truncation orders, an adaptive series doubles \(N\), starting
from 16 and stopping at 16384, until its conditioning-based residual indicator
is at most \(10^{-11}\). Table~\ref{tab:numerical-validation}(a) shows that the
adaptive series meets this target in 129 cases. The six capped cases all have
\(p=0.999\) and relative activation gap \(10^{-10}\) or \(10^{-6}\), confirming
that slow series convergence as \(p\uparrow1\), rather than root
ill-conditioning, is the difficult regime. Bisection and safeguarded Newton
meet the reporting target in all 135 cases.

For a representative residue-corrected calculation with
\(p=\tfrac23\), \(\lambda=0.8\), and \(y=1.26\), the correction reduces
\(|\widehat x-x_{\rm ref}|\) from \(1.64\times10^{-5}\) to
\(9.80\times10^{-7}\) at \(N=30\), and from \(1.96\times10^{-7}\) to
\(2.25\times10^{-8}\) at \(N=50\). This illustrates finite-order
improvement, not uniform computational superiority. The supplementary
\(p=0.99\) tests require increasing contour effort; bisection or safeguarded
Newton is preferable in the hardest cases. Detailed contour-height and
quadrature tests are provided in the supplementary material.

For the algorithmic test, we minimize
\(F(x)=\tfrac12\|Ax-b\|^2+\lambda\|x\|_p^p\) on two Gaussian
\(40\times70\) instances with six-sparse ground truths and noise level
\(0.01\). For \(p\in\{0.5,0.8\}\), we use \(x_0=0\), \(\lambda=0.04\),
\(\alpha=0.9/\|A\|_2^2\), and the certificates of
Theorem~\ref{thm:lp-inexact} with
\(\delta_k=10^{-12}/(k+1)^2\) and
\(\eta_k=10^{-4}/(k+1)^{1.25}\). The stationarity measure is
\[
 S(x):=\left\|\bigl((A^{\mathsf T}(Ax-b))_i
 +\lambda p\,\operatorname{sgn}(x_i)|x_i|^{p-1}\bigr)_{i:x_i\ne0}
 \right\|_2.
\]

All eight series and Newton runs terminate within 177--198 iterations. Across
their 1506 accepted steps, the model-value certificate, residual certificate,
and predicted descent inequality all hold; the largest observed ratio
\(\|r_{k+1}\|/\eta_k\) is \(2.31\times10^{-2}\). No Newton fallback is used
by the series evaluator. Thus, the computable tests in
Section~\ref{subsec:pg-certified} can be enforced in a nonconvex
proximal-gradient calculation. Full traces and reproducibility details are
given in the supplementary material. MATLAB routines for the series and
residue-corrected formulas are available in the
\href{https://github.com/lshen03/Lp-Proximity-Operator}{project repository}.

\begin{table}[h!]
\centering
\caption{Numerical validation. In panel (a), ``accurate'' means
\(e_z\le10^{-11}\). Panel (b) reports the series-based proximal-gradient
runs; safeguarded Newton gives the same \(K\), \(F(x_K)\), and \(S(x_K)\)
to the displayed digits.}
\label{tab:numerical-validation}
\scriptsize
\setlength{\tabcolsep}{2.6pt}
\begin{minipage}[t]{0.53\linewidth}
\centering
\textbf{(a) Scalar accuracy}\\[2pt]
\begin{tabular}{lcc}
\toprule
Evaluator & Accurate/135 & Largest \(e_z\)\\
\midrule
Series, \(N=10\)  & 30  & \(2.13\times10^{-2}\)\\
Series, \(N=50\)  & 72  & \(4.63\times10^{-3}\)\\
Series, \(N=110\) & 90  & \(2.05\times10^{-3}\)\\
Adaptive series    & 129 & \(1.00\times10^{-7}\)\\
Bisection          & 135 & \(8.82\times10^{-12}\)\\
Safeguarded Newton & 135 & \(6.79\times10^{-12}\)\\
\bottomrule
\end{tabular}
\end{minipage}\hfill
\begin{minipage}[t]{0.44\linewidth}
\centering
\textbf{(b) Proximal-gradient runs}\\[2pt]
\begin{tabular}{ccrcc}
\toprule
Seed & \(p\) & \(K\) & \(F(x_K)\) & \(S(x_K)\)\\
\midrule
1 & 0.5 & 177 & 0.239049 & \(1.62\times10^{-7}\)\\
1 & 0.8 & 190 & 0.248428 & \(1.76\times10^{-7}\)\\
2 & 0.5 & 198 & 0.263414 & \(1.06\times10^{-7}\)\\
2 & 0.8 & 188 & 0.200700 & \(1.09\times10^{-7}\)\\
\bottomrule
\end{tabular}
\end{minipage}
\end{table}
\FloatBarrier

\section{Conclusions}
\label{sec:conclusion}

We derived an exact Lagrange--B\"urmann series for the larger stationary root
of the scalar nonconvex \(\ell_p\) proximal problem and proved uniform
convergence on the full active proximal domain. For rational \(p\), an explicit
residue-class formula reduces the series to a finite sum of generalized
hypergeometric functions, including compact formulas for
\(p\in\{\tfrac13,\tfrac12,\tfrac23\}\).
A Mellin--Barnes representation gives an exact residue correction and
separates analytic truncation error from quadrature error; an active-branch
residual supplies a computable a posteriori stopping test.

The activation threshold is strictly above the saddle--node threshold. As a
result, the active proximal root is uniformly well-conditioned; the main
near-activation issue is the series convergence factor, particularly when
\(p\) approaches one. Finally, a model-descent test and a computable
stationarity residual yield a rigorous inexact proximal-gradient result:
objective values converge, successive steps vanish, and every cluster point
is critical under the stated error conditions. The scalar parameter sweep
confirms the predicted slowdown as \(p\) approaches one, while the
sparse-recovery tests show that the two acceptance certificates can be
enforced in practice.

\section*{Declarations}

\noindent\textbf{Funding.}
The work of Lixin Shen was supported in part by the National
Science Foundation under Grant No.~DMS-2208385.

\medskip
\noindent\textbf{Competing interests.}
The authors have no relevant financial or non-financial interests
to disclose.

\medskip
\noindent\textbf{Data availability.}
No external datasets were used. The numerical experiments use synthetic data
generated from the random seeds specified in the supplementary material.

\medskip
\noindent\textbf{Code availability.}
The MATLAB implementation is available at
\url{https://github.com/lshen03/Lp-Proximity-Operator}.

\bibliographystyle{plainnat}
\bibliography{reference_fixed}

\end{document}